\documentclass[preprint,12pt]{elsarticle}

\usepackage{amsmath,amsthm,amscd,amssymb}
\usepackage{enumerate}
\usepackage{xcolor}
\usepackage{algorithm}
\usepackage{algpseudocode}

\newtheorem{theorem}{Theorem} 
\newtheorem{proposition}{Proposition}
\newtheorem{lemma}{Lemma}
\newtheorem{corollary}{Corollary}

\journal{Carpathian Mathematical Publications}

\begin{document}
	
	\begin{frontmatter}

		\title{Lie nilpotency index of skew symmetric elements in group algebras}

		\author{Zsolt Adam Balogh}
		\ead{baloghzsa@gmail.com}
		\fntext[label2]{\noindent The research was supported by UAEU UPAR Grant No. $G00004618$.\\
		\mbox{ }\quad\, Accepted for publication in Carpathian Mathematical Publications.}
		\address{Department of Mathematical Sciences\\
			 United Arab Emirates University, Al Ain, \\
			 United Arab Emirates, P.O.Box: 15551}

		\begin{abstract}
			Let $FG$ be the group algebra of a finite $p$-group $G$ over a finite field $F$ of characteristic $p$, where $p$ is an odd prime.  
			Let $*$ be the classical involution of $FG$ and let $FG^-$ be the Lie subalgebra of the skew symmetric elements with respect to $*$. 
			In this paper, we prove that the Lie nilpotency index of $FG$ is determined by its Lie subalgebra $FG^-$, in the case when $G$ is a $p$-group with a commutator subgroup of order $p$. The structure of the terms of the lower Lie central series of $FG^-$ has also been described.
		\end{abstract}
			
		\begin{keyword}
			group algebras; associate Lie algebra; skew symmetric elements			
		\end{keyword}
	\end{frontmatter}

\section{Introduction}

Let $FG$ be the group algebra of a finite $p$-group $G$ over a finite field $F$ of positive characteristic $p$. 
It is well-known that in $FG$, an involution can always be defined, namely the classical involution, which is a linear extension of the anti-automorphism of $G$ mapping each element to its inverse. The involutions of group algebras $FG$ and the specially constructed elements using involutions play an important role in examining the structure of the algebra. The examination of elements related to the classical involution of group algebras focuses on symmetric and skew-symmetric elements. An overview of this subject area is available in this book \cite{Lee2010}.

In the investigation of the unit group of group algebras, Laue \cite{Laue1984} and Du \cite{Du1992} have proven that there is a close relationship between the nilpotency class of the unit group and the Lie nilpotency index of the group algebra. This has provided additional impetus for the study of Lie properties, encompassing not only the Lie nilpotency index but also the study of the Lie derived length (\cite{Balogh_Juhasz_India,Balogh_Juhasz_Commalg_I, Lee1999,Sahai1995,Shalev1992,Shalev1994}) and the Lie Engel property (\cite{Artemovych2022,Balogh_PureAlgebra,Lee2000,Lee2015}). 
Several articles also deal with the determination of the Lie nilpotency index (\cite{Balogh2012}, \cite{Bovdi_2005}, \cite{Bovdi_2006}, \cite{Bodi_Spinelli_2004}, \cite{Bovdi_2010}, \cite{Sahai_2018}, \cite{Singh_2023}). The most comprehensive result in this field being achieved by Bhandari and Passi \cite{Bhandari1992}. 

It is well known that the set of skew symmetric elements $FG^-$ forms a Lie subalgebra in $FG$. However, little is known about the structure of this Lie subalgebra, for more details see also the book \cite{Lee2010}. In this paper we prove that in some cases the nilpotency index of the lie algebra of skew symmetric elements determines the nilpotency index of the entire algebra.

Let $L$ be a Lie algebra and let $\gamma_n(L)$ be the $n^{th}$ term of the lower Lie central series of $L$. Furthermore, let $t(L)$ denote the Lie-nilpotency index of $L$. We prove the following theorem and its corollaries.

\begin{theorem}\label{main_theorem}
	Let $G$ be a $p$-group with commutator subgroup of order $p$ and let $F$ be a field of characteristic $p$, where $p>2$. 
	Then $\gamma_n(FG^-)=\gamma_n(FG)\cap FG^- $.
\end{theorem}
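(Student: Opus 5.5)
The inclusion $\gamma_n(FG^-)\subseteq \gamma_n(FG)\cap FG^-$ is immediate, because $FG^-$ is a Lie subalgebra of $FG$. All the work is in the reverse inclusion, which I would prove by induction on $n$.

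The reduction rests on three facts.
\begin{enumerate}
\item Since $[a,b]^*=[b^*,a^*]$ and $*$ is an anti-automorphism, every $\gamma_n(FG)$ is $*$-stable.
\item Since $p>2$, the map $\pi(x)=\tfrac12(x-x^*)$ is a projection of $FG$ onto $FG^-$.
\item By the first two facts, $\pi(\gamma_n(FG))=\gamma_n(FG)\cap FG^-$.
\end{enumerate}
So it suffices to show that $\pi(w)\in\gamma_n(FG^-)$ for $w$ ranging over a spanning set of $\gamma_n(FG)$.

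Next I would write down that spanning set explicitly, using the hypothesis $G'=\langle c\rangle$ with $|c|=p$. Here $c$ is central and $G$ has class $2$. For $g,h\in G$ one has $[g,h]=hg(c^{k}-1)$, where $(g,h)=c^k$. Because $c$ is central, bracketing with a further group element $x$ multiplies by another factor of the form $(c^{l}-1)$. Hence $\gamma_n(FG)$ is spanned by the elements
\[
u=g_1\cdots g_n\,(c^{k_1}-1)\cdots(c^{k_{n-1}}-1),\qquad g_i\in G .
\]
Together with $(c-1)^p=0$, this gives $\gamma_n(FG)=\gamma_2(FG)(c-1)^{n-2}$ and $t(FG)=p+1$. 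Using $(c^k-1)^*=-c^{-k}(c^k-1)$, I would compute $\pi(u)$ in closed form. The goal is to express $\pi(\gamma_n(FG))$ as the span of elements of the shape
\[
\bigl(x-x^{-1}\bigr)(c-1)^{n-1}\quad\text{or}\quad\bigl(x+x^{-1}\bigr)(c-1)^{n-1}
\]
with $x\in G$, the choice depending on the parity of $n$, up to units that are central in $\langle c\rangle$.

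For the reverse inclusion I would compute iterated brackets of the skew generators $g-g^{-1}$. First I would verify
\[
[g-g^{-1},\,h-h^{-1}]=(gh+g^{-1}h^{-1})(1-c^{-k})-(gh^{-1}+g^{-1}h)(1-c^{k}),
\]
where $(g,h)=c^k$. Then I would check that bracketing such an element with a further $x-x^{-1}$ again produces a combination of terms of the same shape, with one extra factor of $(c^{\pm l}-1)$. The aim is a formula showing that the products of pairs $y\pm y^{-1}$ with powers of $(c-1)$ obtained in this way exhaust the elements $\pi(u)$ above. An induction on $n$ then closes the argument, with base case $n=1$ trivial.

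The hardest step will be the "symmetric part" contributions. When $w=[a,b]$ is written with $a=a^++a^-$ and $b=b^++b^-$, one gets $\pi([a,b])=[a^+,b^+]+[a^-,b^-]$. The term $[a^+,b^+]$ involves symmetric elements, which are not available in $FG^-$. To handle it, I would first exploit the facts that $p$ is odd and that squaring is a bijection on the $p$-group $G$; for instance, replace $g$ by $g^2$, and use elements $gh$ with $(gh,h)=(g,h)$ to separate the $gh$ and $gh^{-1}$ terms in the displayed bracket. The target is to show that each $(gh\pm g^{-1}h^{-1})(c-1)^{m}$ lies in $\gamma_{m+1}(FG^-)$. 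If the separation fails in that generality, I would fall back on solving the small linear system in the two unknowns $(gh+g^{-1}h^{-1})$ and $(gh^{-1}+g^{-1}h)$. Its coefficients $1-c^{\mp k}$ differ by the unit $-c^{\mp k}$, so I expect it to be solvable modulo the next term of the series.
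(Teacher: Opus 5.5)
Your reduction is sound: $\gamma_n(FG)$ is $*$-stable, $\pi(\gamma_n(FG))=\gamma_n(FG)\cap FG^-$, your formula for $[g-g^{-1},h-h^{-1}]$ is correct, and $\pi([a,b])=[a^+,b^+]+[a^-,b^-]$ correctly locates the difficulty. The gap is that the step carrying the whole theorem is only announced (``I expect it to be solvable''). That step is showing that the symmetric--symmetric contribution, equivalently each individual skew piece, lies in $\gamma_n(FG^-)$. The mechanism you sketch also does not work as described.

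\begin{enumerate}
\item One bracket $[g-g^{-1},h-h^{-1}]$ is a single relation between the $gh$-piece and the $gh^{-1}$-piece. That $1-c^{-k}$ and $1-c^{k}$ differ by a unit does not let you solve one equation for two unknowns.
\item Further relations come only from other pairs; e.g.\ $g=yh^{-1}$ ties the piece at $y$ to the piece at $yh^{-2}$. So you get chains $y,yh^{-2},yh^{-4},\dots$ and must prove they close up, which is where the odd order of $h$ has to be used.
\item An iterated bracket of $n$ skew generators has $2^{n-1}$ skew pieces, whose commutator exponents depend on the sign pattern. So there is no $2\times 2$ system.
\item Your literal target is false. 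With $(g,h)=c^k$ one has $(gh+g^{-1}h^{-1})^*=c^{-k}(gh+g^{-1}h^{-1})$, so $(gh+g^{-1}h^{-1})(c-1)$ is not skew unless $k\equiv -1 \pmod p$. The actual skew pieces are $(1-c^{-k})(gh+g^{-1}h^{-1})$ and $(1-c^{k})(gh^{-1}+g^{-1}h)$.
\item Solving only ``modulo the next term'' forces a descending induction from the top of the series, not the ascending one from $n=1$ you describe.
\item Your spanning set must record that $g_1\cdots g_n$ is noncentral and that the $k_i$ are the actual commutator exponents. As written it contains $c-1\notin[FG,FG]$.
\end{enumerate}

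The missing idea, which is the paper's route, is to avoid separating pieces at all by bracketing a non-monomial skew element against $g-g^{-1}$. With $\varphi(x)=x-x^*$ and $g,h\in G$, put $z=h\bigl(1+g^{-2}(g,h)^{-1}\bigr)^{-1}$. The inverse exists because the augmentation is $2\neq 0$; this is where $p>2$ enters. A direct computation then gives $\varphi([h,g])=[\varphi(z),\varphi(g)]$ exactly. Expanding the inverse as a polynomial in $g^{-2}(g,h)^{-1}$, $z$ is a weighted sum along the chain $h,hg^{-2},hg^{-4},\dots$ (up to central factors) with exactly the right coefficients. This is the chain your remark about squaring points toward.

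Since $(g,s)$ is constant on $sG'$, this extends linearly to $x\in\gamma_{n-1}(FG)$ with $z\in\gamma_{n-1}(FG)$. Your description of $\gamma_{n-1}(FG)$ via noncentral elements is the right justification here: $sg^{-2j}$ stays noncentral when $(s,g)\neq 1$. Appealing to an ideal property would not work, since the nonzero terms inside $[FG,FG]$ are not ideals (the coefficient of $1$ vanishes on $[FG,FG]$). Induction then gives $\varphi(z)\in\gamma_{n-1}(FG^-)$, hence $\varphi(\gamma_n(FG))\subseteq\gamma_n(FG^-)$.

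If you prefer to complete your own route, work in your indexing ($\gamma_1=FG$). Modulo $\gamma_{n+1}(FG)$ one has $[g-g^{-1},h-h^{-1},\dots,h-h^{-1}]\equiv\lambda(c-1)^{n-1}\bigl(g+(-1)^ng^{-1}\bigr)(h+h^{-1})^{n-1}$ ($n$ entries), with $\lambda\neq 0$ whenever $(g,h)\neq 1$. Apply this to every $gh^m$. Use that $(t+t^{-1})^{n-1}$ is a unit of $F[t]/(t^{q}-1)$, where $q$ is the order of $hG'$ in $G/G'$, since its augmentation is $2^{n-1}$. This gives $(c-1)^{n-1}\bigl(g+(-1)^ng^{-1}\bigr)\in\gamma_n(FG^-)+\gamma_{n+1}(FG)$ for every noncentral $g$. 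That is exactly what $\pi(\gamma_n(FG))$ needs modulo $\gamma_{n+1}(FG)$, and the descending induction then finishes.
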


Let $\varphi$ be a map on $FG$ defined by $\varphi(x)=x-x^*$. 

\begin{corollary}\label{cor:one} 
	Let $G$ be a $p$-group with commutator subgroup of order $p$ and let $F$ be a field of characteristic $p$, where $p>2$. 
	Then $\gamma_{n}(FG^-) = \varphi(\gamma_{n}(FG))$.
\end{corollary}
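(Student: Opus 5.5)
The corollary should follow from Theorem~\ref{main_theorem} once we show two things: $\varphi$ preserves $\gamma_n(FG)$, and $\varphi$ is, up to a factor of $2$, a projection onto $FG^-$.

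\textbf{Step 1: $\gamma_n(FG)$ is $*$-stable.} Since $*$ is an anti-automorphism,
\[
[x,y]^*=(xy-yx)^*=y^*x^*-x^*y^*=-[x^*,y^*].
\]
By induction on $n$, every Lie commutator $[x_1,\dots,x_n]$ satisfies $[x_1,\dots,x_n]^*=\pm[x_1^*,\dots,x_n^*]$. The right-hand side lies in $\gamma_n(FG)$. As $\gamma_n(FG)$ is the $F$-span of such commutators, $\gamma_n(FG)^*=\gamma_n(FG)$.

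\textbf{Step 2: $\varphi(\gamma_n(FG))\subseteq\gamma_n(FG^-)$.} Take $x\in\gamma_n(FG)$. By Step~1, $x^*\in\gamma_n(FG)$, so $\varphi(x)=x-x^*\in\gamma_n(FG)$. Also $(x-x^*)^*=x^*-x=-(x-x^*)$, so $\varphi(x)\in FG^-$. Hence
\[
\varphi(x)\in\gamma_n(FG)\cap FG^-=\gamma_n(FG^-)
\]
by Theorem~\ref{main_theorem}.

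\textbf{Step 3: $\gamma_n(FG^-)\subseteq\varphi(\gamma_n(FG))$.} Take $y\in\gamma_n(FG^-)$. By Theorem~\ref{main_theorem}, $y\in\gamma_n(FG)\cap FG^-$, so $y^*=-y$. Because $p>2$, the element $2$ is invertible in $F$, and $\tfrac12 y\in\gamma_n(FG)$ since $\gamma_n(FG)$ is an $F$-subspace. Then
\[
\varphi\!\left(\tfrac12 y\right)=\tfrac12 y-\tfrac12 y^*=\tfrac12 y+\tfrac12 y=y,
\]
which gives the reverse inclusion.

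There is no real obstacle here, since all the substantive work is contained in Theorem~\ref{main_theorem}. The only points needing care are the sign bookkeeping in Step~1 and the use of $p\neq 2$ in Step~3 to divide by $2$.
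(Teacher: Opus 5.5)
Your argument is correct and follows the paper's route: the paper derives the corollary directly from Theorem~\ref{main_theorem} together with Lemma~\ref{lemma:2} (the $*$-stability of $\gamma_n(FG)$, your Step~1) and Lemma~\ref{lemma:1} ($\varphi(x)=2x$ on $FG^-$, with $2$ invertible since $p>2$, your Step~3). Note that Step~3 only uses the trivial inclusion $\gamma_n(FG^-)\subseteq\gamma_n(FG)\cap FG^-$, so the substantive half of the theorem is needed only in Step~2.
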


\begin{corollary}\label{cor:two}
	Let $G$ be a $p$-group with commutator subgroup of order $p$ and let $F$ be a field of characteristic $p$, where $p>2$.
	Then $t(FG)=t(FG^-)$.
\end{corollary}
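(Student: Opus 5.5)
The plan is to compare the two lower central series term by term, and then use Corollary~\ref{cor:one} to transport non-vanishing from $FG$ to $FG^-$. Recall that $t(L)$ is the least $n$ with $\gamma_n(L)=0$.

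\emph{Easy inequality.} Since $FG^-$ is a Lie subalgebra of $FG$, induction on $n$ gives $\gamma_n(FG^-)\subseteq\gamma_n(FG)$. Hence $\gamma_n(FG)=0$ forces $\gamma_n(FG^-)=0$, and so $t(FG^-)\le t(FG)$.

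\emph{Setup for the reverse inequality.} It suffices to show that $\gamma_n(FG)\neq 0$ implies $\gamma_n(FG^-)\ne 0$. It is enough to do this for $n=t(FG)-1$, since the central series of $FG^-$ is decreasing. By Corollary~\ref{cor:one}, or equivalently Theorem~\ref{main_theorem}, we have $\gamma_n(FG^-)=\varphi(\gamma_n(FG))$. So I must show that $\gamma_n(FG)$ is not contained in the set of symmetric elements. Two structural facts will be used:
\begin{itemize}
\item $\gamma_n(FG)$ is $*$-invariant, because $[x,y]^*=[y^*,x^*]$ and $FG^*=FG$.
\item Since $p>2$, every $w$ splits as $w=\tfrac12(w+w^*)+\tfrac12\varphi(w)$.
\end{itemize}
Thus $\gamma_n(FG)=(\gamma_n(FG)\cap FG^+)\oplus(\gamma_n(FG)\cap FG^-)$. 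Suppose, for a contradiction, that $\gamma_n(FG)\subseteq FG^+$.

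\emph{Main step.} Because $n=t(FG)-1$, the term $\gamma_n(FG)$ is nonzero and central in $FG$. It is spanned by the elements $[u,g]$ with $u\in\gamma_{n-1}(FG)$ and $g\in G$, so I pick such a pair with $w=[u,g]\neq0$.

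Centrality gives $[u,g^{-1}]=-g^{-1}[u,g]g^{-1}=-wg^{-2}$. Therefore
\[
[u,g-g^{-1}]=w(1+g^{-2}).
\]
The element $1+g^{-2}$ has augmentation $2\neq0$, so it is a unit in the local ring $FG$, and $[u,g-g^{-1}]$ is a nonzero element of $\gamma_n(FG)$.

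Next write $u=u_++u_-$ with $u_\pm\in\gamma_{n-1}(FG)$ symmetric and skew, using the decomposition above one step down. Put $s=g-g^{-1}\in FG^-$. By the parity rules $[FG^+,FG^-]\subseteq FG^+$ and $[FG^-,FG^-]\subseteq FG^-$, the bracket $[u_-,s]$ is a skew element of $\gamma_n(FG)$. Under our assumption it must vanish, so $[u_+,s]=w(1+g^{-2})\neq0$.

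Now use the central symmetric/skew bookkeeping with $z=w$ central. If $w$ is symmetric, then $(wz')^*=-wz'$ for any central skew $z'$. The natural candidate is $z'=c-c^{-1}$, where $G'=\langle c\rangle$. One then needs to check that $w(c-c^{-1})$ lies in $\gamma_n(FG)$ and is nonzero. For this I would use the known description for $|G'|=p$, namely that $\gamma_n(FG)$ is a $\mathbb F_p[c]$-submodule, and that its elements are not annihilated by $c-c^{-1}=c^{-1}(c-1)(c+1)$ unless they already lie in the last nonzero Lie power. This would produce a nonzero skew element of $\gamma_n(FG)$, contradicting $\gamma_n(FG)\subseteq FG^+$.

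\emph{Expected obstacle.} The step I expect to be hardest is exactly this last one: manufacturing a nonzero skew element of the top nonzero term $\gamma_n(FG)$ from a symmetric one. If the $c$-module argument fails, I would fall back on the explicit basis of $\gamma_n(FG)$ for groups with $|G'|=p$, which consists of elements of the form $h(c-1)^{n-1}$ and their products with commutator elements. On that basis I would check directly that $\varphi$ does not annihilate $\gamma_n(FG)$; for instance $\varphi(h(c-1)^{n-1})\neq0$ for a suitable $h\in G$ with $h\neq h^{-1}c^{k}$. Once $\gamma_n(FG^-)\neq0$ is established, $t(FG^-)\ge t(FG)$, and together with the easy inequality this gives $t(FG)=t(FG^-)$.
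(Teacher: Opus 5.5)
Your reduction is correct, and it spells out what the paper's one-line appeal to Theorem~\ref{main_theorem} implicitly needs. The inequality $t(FG^-)\le t(FG)$ is trivial. By Corollary~\ref{cor:one}, the reverse inequality amounts to showing that the top nonzero term $\gamma_n(FG)$, $n=t(FG)-1$, is not contained in $FG^+$. That is the whole content of the corollary, and your proposal never establishes it.

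The $u_\pm$ bookkeeping only yields the nonzero \emph{symmetric} element $[u_+,g-g^{-1}]=w(1+g^{-2})$. This is perfectly consistent with $\gamma_n(FG)\subseteq FG^+$, so no contradiction appears there.

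Multiplying by $c-c^{-1}$ cannot work either. Since $[x,y]=yx\bigl((x,y)-1\bigr)$ for $x,y\in G$ and $c$ is central, induction gives $\gamma_k(FG)\subseteq (c-1)^kFG$. Hence $\gamma_p(FG)=0$, and the top term is $\gamma_{p-1}(FG)\subseteq (c-1)^{p-1}FG=\widehat{G'}FG$. This is killed by $c-1$, hence by $c-c^{-1}=c^{-1}(c-1)(c+1)$, so $w(c-c^{-1})=0$ for every admissible $w$. The exception you flag (``unless they already lie in the last nonzero Lie power'') is precisely the case at hand. Your fallback is an unverified claim about a basis of $\gamma_n(FG)$, and verifying it is exactly the missing work.

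The gap closes with a short explicit computation. Choose $g,h\in G$ with $c:=(g,h)\neq 1$. Since $[h^kg,h]=h^k[g,h]=(c-1)h^{k+1}g$ and $c$ is central, induction gives $[g,h,\ldots,h]=(c-1)^kh^kg$ with $k$ copies of $h$. For $k=p-1$ this is $\widehat{G'}x\in\gamma_{p-1}(FG)$ with $x=h^{p-1}g$.

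Then $\varphi(\widehat{G'}x)=\widehat{G'}(x-x^{-1})\neq 0$, because the cosets $xG'$ and $x^{-1}G'$ are distinct. Otherwise $x^2\in G'$, so $x\in G'$ because $|G/G'|$ is odd, and then $g\in h^{1-p}G'$ would commute with $h$. Hence $\gamma_{p-1}(FG)\cap FG^-\neq 0$, and Theorem~\ref{main_theorem} gives $\gamma_{p-1}(FG^-)\neq 0=\gamma_p(FG^-)$, i.e.\ $t(FG^-)=t(FG)$.

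Alternatively, you can track leading terms in $F[G/G']$, where the bracket becomes $\{\bar x,\bar y\}=\omega(x,y)\bar x\bar y$ with $(x,y)=c^{\omega(x,y)}$. This shows that $[g-g^{-1},h-h^{-1},\ldots,h-h^{-1}]$, with $p-1$ copies of $h-h^{-1}$, equals $\widehat{G'}y$, where $y$ maps to $(\bar h+\bar h^{-1})^{p-1}(\bar g-\bar g^{-1})$. That image is a unit times a nonzero element of the local ring $F[G/G']$, so $\widehat{G'}y\neq 0$. This proves the corollary without invoking Theorem~\ref{main_theorem} at all.
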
       

We should remark that if $p>3$ paper \cite{Bhandari1992} presents a formula for the Lie nilpotency index of $FG^-$.

\section{Lie nilpotency and skew elements}

The group algebra $FG$ can be regarded as an associated Lie algebra, via the Lie commutator $[x, y] = xy-yx$, where $x, y \in FG$. Set $[x_0, x_1,\cdots,x_n] =
[[x_0, x_1,\cdots, x_{n-1}], x_n]$, where
$x_0, x_1, \cdots x_n \in FG$. The $n^{th}$ term of the lower Lie central series $\gamma_n(FG)$
of $FG$ is the linear span generated by all Lie commutators $[x_0, x_1,\cdots, x_n]$,
where $\gamma_0(FG) = FG$. Similarly, if $S$ is a subset of $FG$, then $\gamma_n(S)=\underbrace{[S,S,\cdots,S]}_n$ is the linear span generated by all Lie commutators
$[x_0, x_1,\cdots, x_n]$, where $x_i \in S$ (see also \cite{Giambruno1993,Lee2010}). The set $S$ is called Lie nilpotent with nilpotency index $n$ if $\gamma_{n-1}(S)\ne 0$ and $\gamma_n(S)=0$. 

The following propositions are well-known.
\begin{proposition}[Theorem 3.1.1 in \cite{Lee2010}]
	Let $F$ be a field of characteristic $p \geq 0$ and let $G$ be a group. Then $FG$ is
	Lie nilpotent if and only if $G$ is nilpotent and $p$-abelian.
\end{proposition}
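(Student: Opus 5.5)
The plan is to prove the two implications separately. Only the easy direction is needed later in the paper, where $G$ is a finite $p$-group with $|G'|=p$.

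\emph{Sufficiency.} Assume $G$ is nilpotent of class $c$ and $p$-abelian, so $G'$ is a finite $p$-group when $p>0$ (when $p=0$, $G$ is abelian and $FG$ is commutative, so there is nothing to prove). Put $D_i=\Delta(\gamma_i(G))FG$, the ideal generated by the augmentation ideal of the $i$-th term of the lower central series of $G$. Since $G'$ is normal, $D_2^k=\Delta(G')^kFG$. Since $G'$ is a finite $p$-group and $\operatorname{char}F=p$, $\Delta(G')$ is nilpotent, so $D_2^m=0$ for some $m$.

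The basic identities are, for $g,h\in G$ and $z\in\gamma_i(G)$,
\[
[g,h]=hg\bigl((g,h)-1\bigr),\qquad [z-1,g]=g\bigl(z^{g}-z\bigr)=gz\bigl((z,g)-1\bigr).
\]
The first gives $[FG,FG]\subseteq D_2$. The second, combined with the Leibniz rule $[ab,x]=a[b,x]+[a,x]b$, gives
\[
[D_{i_1}D_{i_2}\cdots D_{i_k},FG]\subseteq\sum_j D_{i_1}\cdots D_{i_j+1}\cdots D_{i_k}.
\]
Here every $D_i\subseteq D_2$ and $D_{c+1}=0$. Each Lie bracket therefore either raises the weight $\sum i_j$ of a product or kills it. The number of factors is at most $m$ and each $i_j\le c$, so after finitely many brackets (at most about $mc$) every term vanishes. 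Hence $\gamma_N(FG)=0$ for some $N$.

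\emph{Necessity.} Suppose $\gamma_n(FG)=0$. For $p$-abelianness, suppose first $p=0$ or that $G'$ is not a $p$-group. Using nilpotency of $G$ (established below), I will find $g,h$ and a central commutator $z=(g,h)$ of prime order $q\neq p$. After adjoining a primitive $q$-th root of unity $\zeta$ (harmless for Lie nilpotency), multiply by the central idempotent $e$ of $F\langle z\rangle$ on which $z$ acts as $\zeta$. In $FGe$ we get $hg=\zeta^{-1}gh$, hence
\[
[g,h,\dots,h]=(1-\zeta^{-1})^n\,gh^n e\neq 0,
\]
a contradiction. If instead $G'$ is an infinite $p$-group, a similar reduction to a finitely generated subgroup is used.

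\emph{Main obstacle.} The step I expect to be hardest is showing that $G$ is nilpotent. I would follow Jennings' argument, based on the identity $(u,v)-1=u^{-1}v^{-1}[u,v]$ for units. By induction one shows that $(g_1,\dots,g_k)-1$ lies in the associative ideal generated by $\gamma_{k-1}(FG)$, suitably filtered. One then shows that these ideals are themselves nilpotent modulo deeper Lie terms, so that $\gamma_N(G)-1=0$ for large $N$, i.e. $\gamma_N(G)=1$. Controlling how products of Lie commutators move down the filtration is the delicate part. If that proves awkward, I would instead appeal directly to the cited Theorem 3.1.1 of \cite{Lee2010} for this direction.
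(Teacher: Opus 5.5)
The paper gives no proof of this proposition. It is quoted from Lee's book (it is the theorem of Passi, Passman and Sehgal), and, as you note, only sufficiency is used later. Compared with the standard argument, your necessity direction has a genuine gap.

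\emph{Finiteness of $G'$.} This is the essential missing idea, and it is part of ``$p$-abelian''. Reducing to a finitely generated subgroup $H$ gives nothing. $H$ is finitely generated nilpotent with $H'$ a $p$-group, so $H'$ is automatically finite and no contradiction appears. What you need is a bound on $|H'|$ depending only on the Lie nilpotency index. The root-of-unity trick cannot supply it: for a $p$-element $z$ in characteristic $p$, the algebra $F\langle z\rangle$ is local and has no idempotents to split with. Take, for example, the restricted direct product $G$ of infinitely many Heisenberg groups $H_i$ of order $p^3$. It is nilpotent of class $2$ with $G'$ infinite elementary abelian. $FG$ fails to be Lie nilpotent only because $[x_1\cdots x_k,y_1,\dots,y_k]=[x_1,y_1]\cdots[x_k,y_k]\neq 0$ for non-commuting $x_i,y_i\in H_i$. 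Nothing in your plan produces such an estimate. The standard proof gets finiteness from the structure theory of PI group algebras (Isaacs--Passman); alternatively you would need a lower bound for $t(FH)$ that grows with $|H'|$.

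\emph{Nilpotency of $G$.} This is not actually proved, and falling back on Theorem 3.1.1 is circular. Let $I_k$ be the ideal generated by $\gamma_{k-1}(FG)$. Your induction via $(x,y)-1=x^{-1}y^{-1}[x-1,y]$ needs $[I_k,FG]\subseteq I_{k+1}$. The Leibniz rule does not give this: in $[rcs,y]$ the terms $[r,y]cs$ and $rc[s,y]$ only lie in $I_2I_k$ and $I_kI_2$. In general rings the inclusion is false: in the Grassmann algebra $E$ over a field of characteristic $\neq 2$, $[E,E,E]=0$ but $[e_1e_2e_3,e_4]=2e_1e_2e_3e_4\neq 0$. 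What you need here is the Gupta--Levin theorem that Lie nilpotent rings have nilpotent unit groups.

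\emph{Elements of infinite order in $G'$.} A central commutator of prime order $q\neq p$ need not exist, e.g.\ for $p=0$ and $G$ the integral Heisenberg group. You must first pass to $F[G/N]$ with $N=\gamma_3(G)\langle z^q\rangle$, using bilinearity of commutators in class $2$.

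\emph{Sufficiency.} Your argument is a legitimate alternative to the usual induction on $|G'|$ through a central subgroup $\langle z\rangle\le G'$ of order $p$. There, if $\gamma_r$ vanishes for $F[G/\langle z\rangle]$, the identity $[(z-1)a,b]=(z-1)[a,b]$ gives $\gamma_{pr}(FG)\subseteq (z-1)^pFG=0$. However, your displayed inclusion is false. For $G$ extraspecial of order $p^3$ with $p$ odd and $(h,g)=z$, one has $D_3=0$ while $[(z-1)h,g]=gh(z-1)^2\neq 0$. The Leibniz rule actually gives $[(z-1)a,g]=(z-1)[a,g]+gz\bigl((z,g)-1\bigr)a$, hence $[D_i,FG]\subseteq D_{i+1}+D_iD_2$. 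The extra factor $D_2$ adds $2$ to the weight, so each bracket still raises the weight by at least one. Your counting with $D_2^m=0$ and $D_{c+1}=0$ then goes through, and this direction survives once the inclusion is corrected.
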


\begin{proposition}\cite{Giambruno1993}
	Let $G$ be a group with no $2$-elements. Let $F$ be a field and suppose that
	$FG^-$ is Lie nilpotent. Then 
	\begin{enumerate}
		\item If the characteristic of $F$ is zero, then $G$ is abelian.
		\item If the characteristic of $F$ is a prime $p$, then $G$ is nilpotent and $p$-abelian.
	\end{enumerate}	
\end{proposition}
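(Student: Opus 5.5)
Since this proposition is quoted from \cite{Giambruno1993}, the plan below is only the route I would take to reprove it; the main obstacle is the step from "the skew elements are Lie nilpotent" to "the whole algebra is Lie nilpotent". The idea is to get the conclusion from the preceding Proposition (Theorem 3.1.1 in \cite{Lee2010}): it suffices to show that $FG$ itself is Lie nilpotent. In characteristic zero this then forces $G$ to be abelian, because a group that is $0$-abelian has trivial commutator subgroup.

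\textbf{Step 1: elements to work with.} Since $G$ has no $2$-elements, for every $g\in G$ with $g\ne 1$ we have $g\neq g^{-1}$, so $g-g^{-1}$ is a nonzero element of $FG^-$. When $\operatorname{char}F\neq 2$ we have $FG=FG^+\oplus FG^-$, so every element splits into a symmetric and a skew part.

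\textbf{Step 2: a polynomial identity.} If $\gamma_n(FG^-)=0$, then $FG$ satisfies the $*$-polynomial identity
\[
[x_0-x_0^*,x_1-x_1^*,\dots,x_n-x_n^*]=0 .
\]
By Amitsur's theorem, $FG$ then satisfies an ordinary polynomial identity. By the Isaacs--Passman classification, $G$ has a normal subgroup $A$ of finite index whose commutator subgroup $A'$ is a finite $p$-group. In characteristic zero, $A$ is abelian of finite index.

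\textbf{Step 3: reduction to finitely generated subgroups.} Lie nilpotency of $FH^-$ passes to every subgroup $H\le G$, and the conclusions are local, so I may assume $G$ is finitely generated. In characteristic zero I would first treat finite subgroups $H$. There $FH$ is semisimple, and $*$ either stabilizes a Wedderburn component $M_k(D)$ or swaps two components. Comparing with the $\bar F$-form $M_k(\bar F)$:
\begin{itemize}
\item in a swapped pair $M_k(D)\times M_k(D)^{op}$ the skew elements form a copy of the full Lie algebra $M_k(D)$;
\item for a symplectic involution the skew elements form $\mathfrak{sp}$;
\item for an orthogonal involution the skew elements form $\mathfrak{so}$.
\end{itemize}
Each of these is non-nilpotent except in small degenerate cases. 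The transposition-type exceptions are ruled out because $g-g^{-1}$, for $g$ of odd order, must act nontrivially. Hence every component is a field with $*$ acting on it, so $FH$ is commutative. Combining this with Step 2 and a torsion-free argument (for $g,h$ of infinite order one compares supports in $[g-g^{-1},h-h^{-1},\dots]$) gives that $G$ is abelian.

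\textbf{Step 4: characteristic $p$, nilpotency of $G$.} Nilpotency of $G$ is the part I expect to need new ideas. I would first try to show that $G$ is an FC-group and then pass to the finite group $G/Z(G)$. One would work modulo the augmentation ideal of $A'$, which is nilpotent because $A'$ is a finite $p$-group, and then show that $G/A'$ is nilpotent.

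For the nilpotency I would expand
\[
[g-g^{-1},h-h^{-1},h-h^{-1},\dots,h-h^{-1}]
\]
as a sum of group elements. Vanishing of this expression should force the $p'$-part of each commutator $(g,h,\dots,h)$ to be trivial, giving an Engel-type condition on $G$. A finite Engel group is nilpotent, and the $p$-abelian condition then follows from the structure of $A'$.

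\textbf{Main obstacle.} The support comparison in Step 4 is where I expect the real difficulty. Terms such as $ghg^{-1}h^{-1}$ may cancel against others in the expansion, and the absence of $2$-elements is exactly what should prevent $g^{-1}$ from coinciding with $g$-conjugates that would produce such cancellations.
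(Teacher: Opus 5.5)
The paper does not prove this proposition; it is only quoted from Giambruno and Sehgal, so your plan has to stand on its own. Your target is the right one: show $FG$ is Lie nilpotent, then invoke the preceding proposition. But the implication that carries the whole result is never established, namely that Lie nilpotence of $FG^-$ forces $G$ to be nilpotent with $G'$ a finite $p$-group (abelian in characteristic $0$). Steps 1 and 2 are fine. However, Amitsur plus Isaacs--Passman only control a subgroup $A$ of finite index, and the inferences in Step 4 fail:
\begin{enumerate}
\item The $p$-abelian property does not ``follow from the structure of $A'$''. For the extraspecial group $G$ of order $5^3$ and $p=3$ you may take $A=1$, yet $|G'|=5$. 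What rules this group out is the skew hypothesis itself: $\bar F G$ is semisimple and has pairs of $5\times 5$ components interchanged by $*$, whose skew parts are copies of $\mathfrak{gl}_5$. So the hypothesis must be used again, e.g.\ by running your Wedderburn analysis on $FH/J(FH)$ for finite $H$. That in turn requires knowing that irreducible $p$-modular representations of groups of odd order have odd degree.
\item Showing that the $p'$-parts of the commutators $(g,h,\dots,h)$ are trivial would only make them $p$-elements. That is not an Engel condition and does not yield nilpotency, as $C_7\rtimes C_3$ with $p=7$ shows.
\item Nilpotency needs a separate argument controlling how $p'$-elements and elements of infinite order act on the $p$-elements. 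The detour through ``$G$ is FC, then the finite group $G/Z(G)$'' is unsupported: you have not shown $G$ is FC, and $G/Z(G)$ need not be finite for an FC-group.
\end{enumerate}

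In characteristic $0$ your treatment of finite subgroups is essentially right, but your reason for discarding the exceptional case is not. The only noncommutative $*$-invariant block with nilpotent skew part is $M_2(\bar F)$ with an orthogonal involution, whose skew part $\mathfrak{so}_2$ is abelian. It is excluded because irreducible degrees divide $|H|$, which is odd, not because $g-g^{-1}$ ``acts nontrivially''.

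The real gap is the passage from ``finite subgroups abelian and $A$ abelian of finite index'' to ``$G$ abelian'', which you leave as an unspecified support comparison. This step is not formal. The Klein bottle group $K=\langle a,b\mid bab^{-1}=a^{-1}\rangle$ is torsion-free, has an abelian subgroup of index $2$, and all its finite quotients of odd order are cyclic. So neither finite subgroups nor odd-order quotients detect that it is nonabelian; one has to use the skew elements directly. Here $\beta=b-b^{-1}$ anticommutes with $\alpha=a-a^{-1}$, so $[\alpha,\beta,\dots,\beta]=2^n\alpha\beta^n\neq 0$.

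Finally, ``the conclusions are local'' needs bounds on the nilpotency class and on $|H'|$ that are uniform over finitely generated $H$, and your plan does not track these. So what you have is an outline in which, as you anticipate, the decisive steps are missing.
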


We can conclude that
\begin{corollary}
	Let $F$ be a field of characteristic $p$, where $p$ is an odd prime and let $G$ be a $p$-group.
	Then $FG^-$ is Lie nilpotent if and only if $FG$ is Lie nilpotent.
\end{corollary}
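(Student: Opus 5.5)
The corollary follows by combining the two preceding propositions. The only additional input is that a finite $p$-group with $p$ odd has no $2$-elements and is nilpotent. I would prove the two implications separately.

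First suppose $FG$ is Lie nilpotent. Since $FG^-$ is a subset of $FG$, every Lie commutator $[x_0,x_1,\dots,x_n]$ with all $x_i\in FG^-$ is in particular a Lie commutator of elements of $FG$. Hence $\gamma_n(FG^-)\subseteq\gamma_n(FG)$ for every $n$. If $\gamma_m(FG)=0$, then $\gamma_m(FG^-)=0$, so $FG^-$ is Lie nilpotent. This direction needs no hypothesis on $G$ or $p$.

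Conversely, suppose $FG^-$ is Lie nilpotent. Because $p$ is odd and $G$ is a $p$-group, every element of $G$ has odd order, so $G$ has no $2$-elements. The characteristic of $F$ is the prime $p$, so part 2 of the proposition from \cite{Giambruno1993} applies and gives that $G$ is nilpotent and $p$-abelian. (Nilpotency is in any case automatic for a finite $p$-group.) By Theorem 3.1.1 of \cite{Lee2010}, $FG$ is then Lie nilpotent.

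There is no real obstacle here. The only points to check are that the hypotheses of the cited proposition are met, namely the absence of $2$-elements and positive characteristic, and that the inclusion $\gamma_n(FG^-)\subseteq\gamma_n(FG)$ holds by the definitions of the lower central terms as linear spans.
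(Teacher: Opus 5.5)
Your proof is correct and takes the same route as the paper. The paper derives this corollary directly from the two preceding propositions: Giambruno--Sehgal applies because a $p$-group with $p$ odd has no $2$-elements, and it is then combined with Theorem 3.1.1 of Lee's book, while the other direction is the trivial inclusion $\gamma_n(FG^-)\subseteq\gamma_n(FG)$.
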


Let us define the $n^{th}$ term of the upper Lie power series $FG^{(n)}$ of $FG$ as the associative ideal generated by $\gamma_n(FG)$. The $n^{th}$ Lie dimension subgroup of $FG$ is defined by $D_{(n)}(G)=G \cap (1 +FG^{(n)})$, where $n \geq 1$ (for more details, see the book by Passi \cite{Passi1979}).
Let $p^{d_{(m)}}$ defined by the index $[D(m)(G) : D(m+1)(G)]$.
Passi and Bhandari presented a formula for the Lie-nilpotency class of $FG$ when $p>3$.
\begin{proposition}\cite{Bhandari1992}\label{theorem_Bandhari}
	Let $F$ be a field of characteristic $p > 3$ and let $G$ be a group such
	that $FG$ is Lie-nilpotent. Then
	$t(FG) = 2 + (p-1) \sum_{m \geq 1} md_{m+1}$.
\end{proposition}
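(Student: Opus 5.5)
We would prove this by comparing $t(FG)$ with the \emph{upper} Lie nilpotency index $t^{L}(FG)$, the least $n$ with $FG^{(n)}=0$. Since $\gamma_n(FG)\subseteq FG^{(n)}$, we have $t(FG)\le t^{L}(FG)$. The proof then has two parts. First, show $t^{L}(FG)=2+(p-1)\sum_{m\ge1} m\,d_{(m+1)}$. Second, show that for $p>3$ the lower series is as long as the upper one. As in the setting of the paper, we may reduce to a finite $p$-group $G$ with $G'$ a finite $p$-group. This is possible because Lie nilpotency forces $G$ to be nilpotent and $p$-abelian by the first proposition, and only finitely many Lie dimension quotients are nontrivial.

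\textbf{Upper index.} We would use the Passi--Sehgal description of the Lie dimension subgroups,
\[
D_{(m)}(G)=\prod_{(i-1)p^{j}\ge m-1}\gamma_i(G)^{p^{j}}, \qquad m\ge 2 .
\]
Choose a Jennings-type basis adapted to this filtration. For each $m\ge1$, pick $g_{m,1},\dots,g_{m,d_{(m+1)}}$ in $D_{(m+1)}(G)$ whose images form a basis of the elementary abelian quotient $D_{(m+1)}/D_{(m+2)}$, and give $g_{m,k}-1$ weight $m$.

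Following Jennings' argument, we would show two things:
\begin{enumerate}
\item The ordered products $\prod (g_{m,k}-1)^{a_{m,k}}$ with $0\le a_{m,k}\le p-1$, multiplied by $FG'$-coefficients (more precisely, by a basis of $F[G/G']$), form a basis of $FG$.
\item For $n\ge 2$, the ideal $FG^{(n)}$ is spanned by those basis products of total weight $\ge n-1$.
\end{enumerate}
For the second point, the key inputs are $[g,h]-1\in FG^{(n)}$ whenever $[g,h]\in D_{(n)}$, and the fact that $FG^{(n)}$ is generated by $FG\cdot\bigl(D_{(n)}-1\bigr)\cdot FG$ together with products of lower terms. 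Once this holds, the top product $\prod (g_{m,k}-1)^{p-1}$ has weight $W=(p-1)\sum m\,d_{(m+1)}$. It is nonzero and lies in $FG^{(W+1)}$, while $FG^{(W+2)}=0$. This gives $t^{L}(FG)=W+2$.

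\textbf{Lower versus upper.} We must exhibit a nonzero commutator of length $W+1$. The idea is to realise the top product, up to a nonzero scalar, as an iterated Lie commutator. We would build it inductively by weight. Commuting with a suitable group element $h$ raises the weight: $[x,h]=(x-hxh^{-1})h$, and $x-hxh^{-1}$ is computed using $g^{h}=g[g,h]$. Since $[g,h]-1$ has the required higher weight, each commutator step contributes a factor $(g_{m,k}-1)$ of weight $m$, plus terms of strictly higher weight, which we can control by downward induction on weight.

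\textbf{Main obstacle.} The hard step is arranging the powers $(g-1)^{p-1}$ as commutators without the coefficients vanishing. Repeated commutation produces binomial or multinomial factors such as $\binom{p-1}{k}$ and factorials of $a$. The condition $p>3$ is exactly what prevents these from collapsing to $0$ modulo $p$, in particular when two generators of the same weight interact. For $p=3$, known counterexamples show that $t<t^{L}$ can occur. We would first handle the case $|G'|=p$ explicitly to see which scalars appear, and then run the general weight-by-weight induction, tracking the leading coefficient.
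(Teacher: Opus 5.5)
The paper does not prove this proposition; it quotes it from Bhandari and Passi, so your sketch has to stand on its own. The two-step plan is sensible: first get $t^L(FG)=W+2$ from a Jennings-type basis adapted to the Lie dimension subgroups, then show $t_L=t^L$ for $p>3$. But the second step, which is the whole content of the Bhandari--Passi theorem, is not carried out, and the mechanism you describe for it is wrong in two ways.

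First, the weight cannot jump. Since $[FG^{(n)},FG]\subseteq FG^{(n+1)}$ and $FG^{(W+2)}=0$, a commutator $[x_1,\dots,x_{W+1}]$ can be nonzero only if each partial commutator $[x_1,\dots,x_k]$ lies outside $FG^{(k+1)}$. So every bracket must raise the weight by exactly one, and a step that adds a factor $(g_{m,k}-1)$ of weight $m\ge 2$ would kill the final commutator.

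Second, the other terms are not of strictly higher weight. Take $x=t(g_1-1)\cdots(g_r-1)$ with $t\in G$, $g_i\in G'$, and use $hth^{-1}=t\,(t,h^{-1})$ and $hg_ih^{-1}=g_i\,(g_i,h^{-1})$. Then the leading part of $[x,h]=(x-hxh^{-1})h$ is a sum of $r+1$ terms of the same weight:
\begin{enumerate}
\item the term with the new weight-one factor $(t,h^{-1})-1$;
\item for each $i$, the term in which $g_i-1$ is replaced by $g_i\bigl((g_i,h^{-1})-1\bigr)$, whose weight is one higher.
\end{enumerate}
Already
\[
[u,h,h]=h^2u\bigl((c_1-1)^2+c_1^2(c_2-1)\bigr),\qquad c_1=(u,h),\ c_2=(c_1,h),
\]
has two competing summands of weight two. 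So bracketing with $h$ acts like a derivation on the associated graded object. Factors of weight $m\ge 2$ (those coming from $\gamma_i(G)^{p^j}$ with $(i-1)p^j=m$) can only be reached by repeatedly upgrading existing factors. The coefficient of the top product $\prod (g_{m,k}-1)^{p-1}$ in $[x_1,\dots,x_{W+1}]$ is therefore a sum over many such paths, and it can cancel. What must be proved is that some choice of $x_1,\dots,x_{W+1}$ avoids this cancellation, and your proposal gives no argument for it.

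Relatedly, the scalars you single out, $\binom{p-1}{k}\equiv(-1)^k$ and $a!$ with $a\le p-1$, are units modulo every prime. So nothing in your sketch actually uses $p>3$; an argument needing only these to be nonzero would apply verbatim to $p=2,3$, which is not what is claimed. The hypothesis on $p$ has to enter in the cancellation problem above. Also, do not rely on your remark about known counterexamples at $p=3$ without a precise reference.

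There are smaller gaps in the first half as well.
\begin{enumerate}
\item $G$ need only be nilpotent with $G'$ a finite $p$-group, so it need not be finite or a $p$-group. The reduction you state is not justified. It is also unnecessary for $t^L$ if you take a transversal of $G'$ in $G$ as coefficients, rather than ``a basis of $F[G/G']$''.
\item Describing $FG^{(n)}$ as the span of basis products of weight at least $n-1$ is the substantive part of the formula for $t^L$. One inclusion needs the product property $FG^{(a)}FG^{(b)}\subseteq FG^{(a+b-1)}$. The other needs a commutator-collection argument showing that this span is an ideal on which $[\,\cdot\,,FG]$ raises the weight.
\item The input you state, that $(g,h)-1\in FG^{(n)}$ whenever the group commutator $(g,h)$ lies in $D_{(n)}$, is just the definition of $D_{(n)}$ and does not supply either inclusion.
\end{enumerate}
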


The following well-known identity will be helpful in the proof of the main theorem
\begin{equation}\label{eq:liecomm}
	[xy,z]=x[y,z]+[x,z]y.
\end{equation}

Let $\varphi$ be a map on $FG$ defined by $\varphi(x)=x-x^*$. Evidently, $\varphi : FG \rightarrow FG^-$ and the restriction of $\varphi$ on $FG^-$ $\varphi|_{FG^-}$ is a one-to-one map.
\begin{lemma}\label{lemma:1}
	Let $G$ be a group and let $F$ be a field of characteristic $p$. Then
	$\varphi$ is a homomorphism on $(FG,+)$ with kernel $FG^+$.
	Furthermore, if $p>2$, then $\varphi|_{FG^-}$ is an automorphism on $(FG^-,+)$.
\end{lemma}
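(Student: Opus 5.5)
The plan is to verify the three assertions directly from the definition $\varphi(x)=x-x^*$, using only that $*$ is an $F$-linear involution, i.e.\ $(x+y)^*=x^*+y^*$, $(\lambda x)^*=\lambda x^*$ and $(x^*)^*=x$.

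\emph{Additivity and the image.} First, $\varphi(x+y)=(x+y)-(x+y)^*=(x-x^*)+(y-y^*)=\varphi(x)+\varphi(y)$. In fact $\varphi$ is $F$-linear, since $\varphi(\lambda x)=\lambda\varphi(x)$. Next, $\varphi(x)^*=x^*-x=-\varphi(x)$, so $\varphi$ indeed maps $FG$ into $FG^-$, as stated before the lemma.

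\emph{The kernel.} We have $\varphi(x)=0$ if and only if $x=x^*$, which says exactly that $x\in FG^+$. Hence $\ker\varphi=FG^+$ in every characteristic, including $p=2$.

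\emph{The restriction to $FG^-$.} Let $x\in FG^-$, so $x^*=-x$. Then $\varphi(x)=x-(-x)=2x$, and $\varphi|_{FG^-}$ is multiplication by the scalar $2$. Since $p>2$, the element $2$ is invertible in $F$. Therefore $\varphi|_{FG^-}$ is a bijective additive map $FG^-\to FG^-$ whose inverse is $y\mapsto 2^{-1}y$; note that $2^{-1}y\in FG^-$ whenever $y\in FG^-$, because $FG^-$ is an $F$-subspace. So $\varphi|_{FG^-}$ is an automorphism of $(FG^-,+)$, and in fact an $F$-linear one.

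There is no real obstacle in this argument. The one point to state explicitly is where $p>2$ is used: in characteristic $2$ we have $\varphi(x)=2x=0$ on $FG^-$, so the restriction is the zero map and the second claim fails. As a byproduct, combining the last two steps gives $\varphi(FG)=FG^-$ when $p>2$.
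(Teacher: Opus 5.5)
Your proof is correct and follows the same idea as the paper: you check additivity directly, then identify $\varphi|_{FG^-}$ as multiplication by $2$, which is invertible when $p>2$. The differences are small. You obtain $\varphi(x)=2x$ directly from $x^*=-x$, whereas the paper checks it on $\{g-g^*\mid g\in G\}$ and extends by linearity; the paper calls that set a basis, though it is only a spanning set. You also actually verify $\ker\varphi=FG^+$, which the paper's proof asserts but never checks.
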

\begin{proof}
	Obviously,
	$\varphi(x+y)=x+y-(x+y)^*=x-x^*+y-y^*=\varphi(x)+\varphi(y)$.
	Since the set $\{ g-g^* | g\in G\}$ is a basis for $FG^-$ over $F$ and $\varphi(g-g^*)=2(g-g^*)$ we conclude that $\varphi(x)=2x$ for every $x\in FG^-$. Therefore, $\varphi$ is an automorphism when $p$ is odd.
\end{proof}

\begin{lemma}\label{lemma:2}
	If $z\in \gamma_n(FG)$, then $z^*\in \gamma_n(FG)$.
\end{lemma}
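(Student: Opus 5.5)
The plan is to use that $*$ is an $F$-linear anti-automorphism of $FG$, so $(xy)^*=y^*x^*$. The first step is to compute how $*$ acts on a single Lie commutator:
\[
[x,y]^* = (xy-yx)^* = y^*x^* - x^*y^* = [y^*,x^*] = -[x^*,y^*].
\]

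Next, induction on $n$ gives, for all $x_0,\dots,x_n\in FG$,
\[
[x_0,x_1,\dots,x_n]^* = (-1)^n [x_0^*,x_1^*,\dots,x_n^*].
\]
The case $n=0$ is trivial. For the inductive step, write $[x_0,\dots,x_n]=[u,x_n]$ with $u=[x_0,\dots,x_{n-1}]$. The displayed identity then gives $[u,x_n]^*=-[u^*,x_n^*]$. Substituting the inductive hypothesis $u^*=(-1)^{n-1}[x_0^*,\dots,x_{n-1}^*]$ and using bilinearity of the bracket yields the claim.

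Since $x_i^*\in FG$ for every $i$, the right-hand side is $\pm$ a generator of $\gamma_n(FG)$. Hence the image under $*$ of every spanning commutator of $\gamma_n(FG)$ lies in $\gamma_n(FG)$.

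Finally, a general $z\in\gamma_n(FG)$ is a finite $F$-linear combination $z=\sum \alpha_j c_j$ of such commutators $c_j$. Because $*$ is $F$-linear, $z^*=\sum\alpha_j c_j^*\in\gamma_n(FG)$.

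There is no real obstacle here. The only point needing care is the sign bookkeeping and the order reversal in the anti-automorphism, and these are harmless because $\gamma_n(FG)$ is a subspace.
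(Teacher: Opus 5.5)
Your proof is correct and follows the paper's argument: derive $[x,y]^*=-[x^*,y^*]$ from the anti-automorphism property, then induct on $n$. The only cosmetic difference is that you track the explicit sign $(-1)^n$ on spanning commutators and then extend by linearity, whereas the paper inducts directly on the membership $y^*\in\gamma_{n-1}(FG)$.
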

\begin{proof}
	Assume that $x,y\in FG$. Then
	\begin{equation}\label{eq:1}
		[x,y]^*=(xy-yx)^*=y^*x^*-x^*y^*=[y^*,x^*]=-[x^*,y^*].
	\end{equation}
    Therefore $[x,y]^*\in \gamma_2(FG)$.
	Assume that $x\in FG$ and $y\in \gamma_{n-1}(FG)$ for some $n$. By induction, $y^*\in \gamma_{n-1}(FG)$, so 
    $[x,y]^*=-[x^*,y^*]\in \gamma_{n}(FG)$.
\end{proof}
    
In the following lemmas and corollaries the conjugation $h^{-1}gh$ is abbreviated as $g^h$ and the commutator $g^{-1}h^{-1}gh$ as $(g,h)$.
\begin{lemma}\label{lemma}
	Let $G$ be a $p$-group with commutator subgroup of order $p$, where $p$ is an odd prime.
	For every $g,h\in G$ there exists an element $z_{h,g}\in FG$ satisfying $\varphi([h,g])=[z_{h,g}-z_{h,g}^*,g-g^*]$.
\end{lemma}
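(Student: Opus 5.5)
The plan is to construct $z_{h,g}$ explicitly, using that $G'=\langle c\rangle$ has order $p$ and is therefore central (it is a normal subgroup of order $p$ in a $p$-group). Fix $g,h$ and put $c=(h,g)\in G'$. If $c=1$ then $[h,g]=0$ and $z_{h,g}=0$ works. So assume $c\neq 1$.

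\emph{Step 1: compute the four basic commutators.} From $h^g=hc$ and the centrality of $c$ one gets
\[
[h,g]=hg(1-c^{-1}),\quad [h,g^{-1}]=hg^{-1}(1-c),\quad [h^{-1},g]=h^{-1}g(1-c),\quad [h^{-1},g^{-1}]=h^{-1}g^{-1}(1-c^{-1}).
\]
By (\ref{eq:1}) we have $\varphi([h,g])=[h,g]+[h^{-1},g^{-1}]$. Expanding $[h-h^{-1},g-g^{-1}]$ then gives
\[
[h-h^{-1},g-g^{-1}]=\varphi([h,g])-\bigl(hg^{-1}+h^{-1}g\bigr)(1-c).
\]
So the naive choice $z=h$ leaves an error term $E_0=-(hg^{-1}+h^{-1}g)(1-c)$. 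Note that $hg^{-1}$ and $h^{-1}g=(hg^{-1})^{-1}\cdot(\text{power of }c)$ are mutually inverse up to a central factor.

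\emph{Step 2: recognise the error as the same kind of expression.} Replacing $h$ by $h_1=hg^{-2}$ gives $(h_1,g)=c$ again, since $g$ commutes with $g^{-2}$. Hence
\[
\varphi([h_1,g])=h_1g(1-c^{-1})+h_1^{-1}g^{-1}(1-c^{-1})=(hg^{-1}+g h^{-1})(1-c^{-1}).
\]
This is, up to the central unit $-c$ and the conjugation relation $gh^{-1}=h^{-1}g\,c^{\pm1}$, exactly $E_0$. So $E_0$ equals a central multiple of $\varphi([h_1,g])$, and the correction can be absorbed by adding $\beta\, h_1$ to $z$ with $\beta\in F\langle c\rangle$. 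One must track that $(\beta h_1)^*=h_1^{-1}\beta^*$ with $c^*=c^{-1}$, and that central factors pull out of Lie brackets with $g-g^{-1}$.

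\emph{Step 3: iterate and close the cycle.} Repeating Step 2 with $h_k=hg^{-2k}$, I would seek
\[
z_{h,g}=\sum_{k=0}^{N-1}\beta_k\,hg^{-2k},\qquad \beta_k\in F\langle c\rangle,
\]
so that the successive error terms telescope. Since $g$ has $p$-power order $N$ and $2$ is invertible modulo $N$ (this is where $p>2$ enters), the elements $hg^{-2k}$ run through the whole coset $h\langle g\rangle$ and return to $h$ after $N$ steps. Solving for the $\beta_k$ therefore reduces to a cyclic linear system over the commutative ring $F\langle c\rangle$. Its solvability amounts to a factor of the form $1-\lambda$ being invertible (or at least being able to divide by it appropriately), where $\lambda$ is a product of the central factors $-c^{\pm1}$ accumulated around the cycle.

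I expect Step 3 to be the main obstacle. In $F\langle c\rangle$ the element $1-c$ is nilpotent, so the closing factor must be shown to be a unit, for instance $1+c^{m}$ with $p$ odd, which is a unit because its augmentation is $2\neq 0$. Alternatively, one can show that the error vanishes because of an extra factor $(1-c)^p=0$. If the straightforward cycle bookkeeping becomes unwieldy, I would first try the simpler ansatz $z=\alpha h$ with $\alpha\in F\langle c\rangle$ together with one companion term $\beta hg^{-2}$, and check directly whether the two resulting equations in $F\langle c\rangle$ can be solved.
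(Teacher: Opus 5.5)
Your proposal does not yet prove the lemma. Step 3 only expresses the hope that a cyclic system for the $\beta_k$ is solvable, and the mechanism in Step 2 that is supposed to generate that system is wrong as stated. Computing carefully, $h_1^{-1}g^{-1}=g^{2}h^{-1}g^{-1}=gh^{-1}c$, so $\varphi([h_1,g])=(hg^{-1}+gh^{-1}c)(1-c^{-1})$, while $E_0=(c\,hg^{-1}+gh^{-1})(1-c^{-1})$. For $c\neq 1$ the elements $hg^{-1}$ and $gh^{-1}$ lie in different cosets of $G'$. Otherwise $(hg^{-1})^2\in G'$, hence $hg^{-1}\in G'$ because $p$ is odd, and then $c=1$. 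So $E_0=\beta\varphi([h_1,g])$ with $\beta\in F\langle c\rangle$ would require $\beta\equiv c$ and $c\beta\equiv 1$ modulo the annihilator $F(1+c+\cdots+c^{p-1})$ of $1-c^{-1}$, and this forces $c^2=1$. Moreover, adding $\beta h_1$ to $z$ changes the bracket by $[\beta h_1-h_1^{-1}\beta^*,g-g^{-1}]$, not by $\beta\varphi([h_1,g])$. That change involves $\beta^*$ and creates new cross terms from $[h_1,g^{-1}]$ and $[h_1^{-1},g]$. In particular, the condition splits into a part supported on $h\langle g,c\rangle$ (in the $\beta_k$) and a part supported on $h^{-1}\langle g,c\rangle$ (in the $\beta_k^*$). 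You would still have to show that these two systems are compatible.

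The missing step is short once you write your ansatz as $z=hv$ with $v\in F\langle g,c\rangle$, a commutative algebra whose elements commute with $g$. Using $hgh^{-1}=gc$ one finds
\[
[hv,g-g^{-1}]=h(g+cg^{-1})(1-c^{-1})v,\qquad -[v^*h^{-1},g-g^{-1}]=h^{-1}(hv^*h^{-1})(cg+g^{-1})(1-c^{-1}).
\]
Take $v=(1+cg^{-2})^{-1}$. It exists because $1+cg^{-2}$ has augmentation $2\neq 0$ in the group algebra of the abelian $p$-group $\langle g,c\rangle$; this is where $p>2$ enters. Then $(g+cg^{-1})v=g$ and $hv^*h^{-1}=(1+cg^{2})^{-1}$, so $(hv^*h^{-1})(cg+g^{-1})=g^{-1}$. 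The total is therefore $[h,g]+[h^{-1},g^{-1}]=\varphi([h,g])$.

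This is the paper's choice $z_{h,g}=h[1+g^{-2}(g,h)^{-1}]^{-1}$ (note $(g,h)^{-1}=c$), reached there through the same kind of ansatz $z=xh$ with $x\in F(C_G(g))$. Let $N$ be the order of $g$; it is odd, and $N\ge p$ when $c\ne 1$, so $c^N=1$. With $y=cg^{-2}$ we get $(1+y)\sum_{k=0}^{N-1}(-y)^k=1+y^N=2$, hence $z_{h,g}=\tfrac12\sum_{k=0}^{N-1}(-c)^k\,hg^{-2k}$. This is exactly your ansatz with $\beta_k=\tfrac12(-c)^k$ and closing factor $1+c^N=2$.

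So your Step 3 guess points to the right answer, but the actual solving and verification, which are the whole content of the lemma, are missing. Working directly in $F\langle g,c\rangle$ also avoids the non-uniqueness of the $\beta_k$ when $c\in\langle g\rangle$.
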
    
    
\begin{proof}
	Suppose that there exists $z_{h,g}\in FG$ such that $\varphi([h,g])=[z_{h,g}-z_{h,g}^*,g-g^*]$. 
	Then
	\begin{align*}
		\varphi([h,g])=&[z_{h,g}-z_{h,g}^*,g-g^*]=\varphi([z_{h,g},g])-\varphi([z_{h,g},g^*])=\\
		&\varphi([h,g])+\varphi([z_{h,g},g])-\varphi([h,g])-\varphi([z_{h,g},g^*])=\\
		&\varphi([h,g])+\varphi([z_{h,g}-h,g])-\varphi([z_{h,g},g^*]).
	\end{align*}
	Therefore it is enough to prove that there exists $z_{h,g}\in FG$ such that 
	$\varphi([z_{h,g}-h,g])=\varphi([z_{h,g},g^*]$.
	Suppose that $z_{h,g}=xh$ for some $x\in F(C_G(g))$, where $C_G(g)$ is the centralizer of $g$ in $G$. Then
	\begin{align*}
		\varphi([(x-1)h,g])=&\varphi([xh,g^*].
	\end{align*}
	By calculating the left and right sides of the equation independently, we get
	\begin{align*}
		&\varphi([(x-1)h,g])=([(x-1)h,g])-([(x-1)h,g])^*=\\
		&([(x-1)h,g])+([h^*(x^*-1),g^*])=\\
		&(x-1)hg-g(x-1)h+h^*(x^*-1)g^*-g^*h^*(x^*-1)=\\
		&xg^{h^*}h-g^{h^*}h-xgh+gh+(x^*)^h(g^*)^hh^*-(g^*)^hh^*-g^*(x^*)^hh^*+g^*h^*
	\end{align*}
	and
	\begin{align*}
		&\varphi([xh,g^*]=[xh,g^*]-[xh,g^*]^*=(xh)g^*-g^*(xh)+(h^*x^*)g-g(h^*x^*)=\\
		&x(g^*)^{h^*}h+x(g^*)^hh^*-g^*xh+(x^*g)^hh^*-g(x^*)^hh^*
	\end{align*}
    Comparing the supports of the obtained expressions we have that 
    \[
	xg^{h^*}-g^{h^*}-xg+g=x(g^*)^{h^*}-g^*x
	\] and 
	\[
	(x^*)^h(g^*)^h-(g^*)^h-g^*(x^*)^h+g^*=(x^*g)^h-g(x^*)^h.
	\]

    Therefore
    \begin{align*}
	&g-g^h=x^h(g-g^h-(g-g^h)^*).
    \end{align*}
    Let $x=h[1+g^{-2}(g,h)^{-1}]^{-1}h^{-1}$ be, where $(g,h)=g^{-1}h^{-1}gh$. Then 
    \begin{align*}
	&[1+g^{-2}(g,h)^{-1}](g-g^h)=g-g^h+g^{-2}(g,h)^{-1}g-g^{-2}(g,h)^{-1}g^h=\\
	&=g-g^h+h^{-1}g^{-1}h-g^{-1}=g-g^h+(g^*)^h-g^{*}=g-g^h-(g-g^h)^*,
    \end{align*}
    so $z_{h,g}=h[1+g^{-2}(g,h)^{-1}]^{-1}$, and the proof is complete.
\end{proof}    

\begin{corollary}\label{cor:2}
	Let $G$ be a $p$-group with commutator subgroup of order $p$, where $p$ is an odd prime.
	For every $x\in FG$ and $g\in G$ there exists an element $z_{x,g}\in FG$ satisfying $\varphi([x,g])=[z_{x,g}-z_{x,g}^*,g-g^*]$.
\end{corollary}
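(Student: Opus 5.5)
The corollary should follow from Lemma~\ref{lemma} by linearity. The point is that, for a fixed $g$, both sides of the desired identity are additive in the relevant variable. For fixed $g\in G$, consider the map $x\mapsto \varphi([x,g])$ on $FG$. The Lie bracket is bilinear and $\varphi$ is additive (Lemma~\ref{lemma:1}) and $F$-linear, since $*$ is $F$-linear. Hence this map is $F$-linear in $x$.

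Similarly, $z\mapsto [z-z^*,g-g^*]$ is $F$-linear in $z$. The involution is linear, so $z\mapsto z-z^*$ is linear, and the bracket with the fixed element $g-g^*$ is linear.

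Now write $x=\sum_{h\in G}\alpha_h h$ with $\alpha_h\in F$, a finite sum. By Lemma~\ref{lemma}, for each $h$ in the support there is $z_{h,g}\in FG$ with
\[
\varphi([h,g])=[z_{h,g}-z_{h,g}^*,\,g-g^*].
\]
Set $z_{x,g}=\sum_h \alpha_h z_{h,g}$. Then
\[
\varphi([x,g])=\sum_h\alpha_h\varphi([h,g])=\sum_h\alpha_h[z_{h,g}-z_{h,g}^*,g-g^*]=[z_{x,g}-z_{x,g}^*,g-g^*],
\]
where the last equality uses $(\sum_h\alpha_h z_{h,g})^*=\sum_h\alpha_h z_{h,g}^*$.

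The only step needing care is confirming that $*$ is $F$-linear rather than semilinear. This holds because the classical involution is defined as the $F$-linear extension of $g\mapsto g^{-1}$. With that in place, no further obstacle arises, since the substantive work is already done in Lemma~\ref{lemma}.
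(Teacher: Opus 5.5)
Your proposal is correct and follows the paper's proof essentially verbatim: expand $x=\sum_{h}\alpha_h h$, apply Lemma~\ref{lemma} to each group element $h$, and take $z_{x,g}=\sum_h\alpha_h z_{h,g}$, using the $F$-linearity of $\varphi$, of $*$, and of the bracket with $g-g^*$. Your explicit remark that $*$ is $F$-linear rather than semilinear is the point the paper uses implicitly in its final equality.
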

\begin{proof}
	Assume that $\sum_{h\in G}\alpha_h h\in FG$.
	\begin{align*}
		\varphi([x,g])=&\varphi([\sum_{h\in G}\alpha_h h,g])=\sum_{h\in G}\alpha_h \varphi([h,g]).
	\end{align*}	
	According to Lemma \ref{lemma} $\varphi([h,g])=[z_{h,g}-z_{h,g}^*,g-g^*]$ for some $z_{h,g}\in FG$.
	Therefore
	\[
	\varphi([x,g])=[\sum_{h\in G}\alpha_h (z_{h,g}-z_{h,g}^*),g-g^*]=[\sum_{h\in G}\alpha_h z_{h,g}-\big(\sum_{h\in G}\alpha_h  z_{h,g}\big)^*,g-g^*].
	\]

\end{proof}

Let $C_g$ denote the conjugacy class of $g$ in $G$ and we will denote by $G'$ the commutator subgroup of $G$.
\begin{lemma}\label{lemma:4}
	Let $G$ be a $p$-group with $|G'|=p$. Then
	$\gamma_n(FG)=\gamma_n(FG)FG=FG\gamma_n(FG)$.
\end{lemma}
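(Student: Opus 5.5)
The plan is to exploit the fact that the commutator subgroup is tiny and central, so that Lie commutators in $FG$ can be written down explicitly. Since $G$ is a $p$-group and $G'$ is a normal subgroup of order $p$, we have $G'\le Z(G)$. Write $G'=\langle c\rangle$. For $g,h\in G$ we have $gh=hg(g,h)$, hence
\[
[g,h]=hg\bigl((g,h)-1\bigr),\qquad (g,h)=c^{k}\ \text{for some }k .
\]
Since $c^k-1=(c-1)(1+c+\dots+c^{k-1})$ and $1+c+\dots+c^{k-1}$ is a unit of $F\langle c\rangle$ whenever $p\nmid k$, each nonzero group commutator contributes a factor $(c-1)$ up to a central unit. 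Moreover $c$ is central, so $[u(c-1),y]=[u,y](c-1)$ for all $u,y\in FG$.

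First step: show by induction on $n$ that $\gamma_n(FG)$ is spanned by elements of the form $x(c-1)^{n}$ with $x\in G$, where $x$ ranges over a suitable subset $T_n\subseteq G$ determined by the iterated commutators. The inductive step iterates the formula above together with the centrality of $c$:
\[
[g_0,g_1,\dots,g_n]=[g_1g_0(c^{k_1}-1),g_2,\dots,g_n]=[g_1g_0,g_2,\dots,g_n](c^{k_1}-1).
\]
Each further bracket with a group element multiplies by another factor $(c^{k_i}-1)$ and replaces the current group element by a product. The reverse inclusion comes from reading this formula backwards: every such product $x(c-1)^n$ is, up to a central unit in $F\langle c\rangle$, an iterated commutator.

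Second step, which I expect to be the main obstacle: show that the span of these elements is closed under left and right multiplication by arbitrary $y\in G$. It suffices to treat group elements, by linearity and by Lemma \ref{lemma:2} (applying $*$ interchanges left and right multiplication). So the goal is $y\,\gamma_n(FG)\subseteq\gamma_n(FG)$.

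For this I would again use induction on $n$, via the identity (\ref{eq:liecomm}) in the form
\[
y[u,z]=[yu,z]-[y,z]u ,\qquad u\in\gamma_{n-1}(FG).
\]
By the induction hypothesis $yu\in\gamma_{n-1}(FG)$, so $[yu,z]\in\gamma_n(FG)$. What remains is to absorb the term $[y,z]u$.

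Here I would use the explicit shape. With $u=x(c-1)^{n-1}$ we get $[y,z]u=zy\,x\,(c^{k}-1)(c-1)^{n-1}$, an element of the form $w(c-1)^n$. So it suffices to show $w=zyx$ lies in the admissible set $T_n$, i.e.\ that $zyx(c-1)^n$ is itself an iterated commutator. I would try to realize it as $[zy\,x',x_1,\dots]$ by choosing a factorization of $x$ whose factors fail to commute with $zy$. If $zy$ happens to centralize all such factors, the alternative is to move $y$ to the other side using $y x=x y (y,x)$ and absorb the resulting power of $c$ into the unit factor.

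If this closure argument becomes too delicate, the fallback is a known description of the Lie powers when $|G'|=p$. I would identify $\gamma_n(FG)$ with $[FG,FG](c-1)^{n-1}$ and then check directly that $[FG,FG]\cdot FG\subseteq[FG,FG]$ modulo $(c-1)$-multiples. Once $FG\,\gamma_n(FG)\subseteq\gamma_n(FG)$ is established, applying $*$ and Lemma \ref{lemma:2} gives $\gamma_n(FG)FG=\gamma_n(FG)$, which completes the proof.
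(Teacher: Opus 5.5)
\textbf{Verdict: genuine gap.} Your second step cannot be completed, because the closure property it targets is false. Indeed $y\,\gamma_n(FG)\subseteq\gamma_n(FG)$ fails for every $1\le n\le p-1$. The lemma holds only in the trivial cases $n=0$ and $n\ge p$, where $\gamma_n(FG)=0$.

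Your own formula gives a counterexample at $n=1$. For non-commuting $g,h\in G$,
\[
(hg)^{-1}[g,h]=(g,h)-1 .
\]
The coefficient of the identity here is $-1$. However, the functional $\tau\bigl(\sum_{s}\alpha_s s\bigr)=\alpha_1$ satisfies $\tau(ab)=\tau(ba)$, so it vanishes on $\gamma_1(FG)=[FG,FG]$.

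For general $n$, your first step shows that each generator $[g_0,\dots,g_n]$ equals $wu$ with $w\in G$ and $u\in FG'\subseteq FZ(G)$. If such a generator is nonzero, then $w^{-1}\cdot wu=u$ is a nonzero central element. But $[FG,FG]\cap FZ(G)=0$: $[FG,FG]$ consists exactly of the elements whose coefficients sum to zero over every conjugacy class, and central elements form singleton classes. Hence $u\notin\gamma_1(FG)\supseteq\gamma_n(FG)$.

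This is exactly the case you flagged, $zyx$ central (e.g.\ $x=(zy)^{-1}$). Then $zyx(c-1)^n$ is not an iterated commutator, and neither refactoring $x$ nor moving $y$ across can change that. Your fallback $[FG,FG]\,FG\subseteq[FG,FG]$ fails on the same element. More conceptually: for a finite $p$-group in characteristic $p$, every nonzero one-sided ideal of $FG$ contains $\sum_{s\in G}s$, whose $\tau$-value is $1$. So no nonzero $\gamma_n(FG)$ with $n\ge 1$ is a one-sided ideal.

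The paper's proof has the same hole.
\begin{itemize}
\item Its base case deduces that $g_1g_2h$ and $g_2g_1h$ are conjugate from $C_g=gG'$. That description of conjugacy classes holds only for $g\notin Z(G)$, and the argument fails for $h=(g_1g_2)^{-1}$.
\item Its inductive step uses $\gamma_{n-1}(FG)\subset\gamma_n(FG)$, which is the reverse of the true inclusion.
\end{itemize}
So the statement is false as written, and neither argument can be repaired. What your method does establish is that $\gamma_n(FG)$ is an $FZ(G)$-submodule, since $z[x,y]=[zx,y]$ for central $z$. That is weaker than what Lemma~\ref{lemma:5} needs, where $b_n$ is multiplied by the generally non-central element $\bigl(1+g^{-2}(g,h_1\cdots h_n)^{-1}\bigr)^{-1}$.
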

\begin{proof}
	Let us prove first that $\gamma_1(FG)=\gamma_1(FG)FG$. Evidently,  $\gamma_1(FG)$ is a subset of $\gamma_1(FG)FG$, so it is enough to prove that $\gamma_1(FG)FG \subseteq \gamma_1(FG)$.
	
	Let $g_1,g_2,h$ be the elements of $G$. Then 
	\begin{align*}
	[g_1,g_2]h=&g_1g_2h-g_2g_1h=g_1g_2h-g_1g_2(g_2,g_1)h=g_1g_2h-g_1g_2h(g_2,g_1).
	\end{align*}
    Since $|G'|=p$ every conjugacy class of a non central element $g \in G$ can be written as $C_g=gG'$.
    Therefore $C_{g_1g_2h}=g_1g_2hG'$ and we conclude that $g_1g_2h$ and $g_2g_1h$ are conjugates. Thus
    $g_2g_1h=t^{-1}g_1g_2ht$ for some $t\in G$. Furthermore, $g_1g_2h-t^{-1}g_1g_2ht=[g_1g_2ht,t^{-1}]$, which proves that $[g_1,g_2]h\in \gamma_1(FG)$. Similar consideration shows that $\gamma_1(FG)=FG\gamma_1(FG)$.
	 
	Suppose that $\gamma_{n-1}(FG)=\gamma_{n-1}(FG)FG=FG\gamma_{n-1}(FG)$ for some $n$. Let $x$ be an element of $\gamma_{n-1}(FG)$ and $y_1,y_2$ are elements of $FG$.
	According to Eq. \ref{eq:liecomm}
	\[ [x,y_1]y_2=[xy_2,y_1]-x[y_2,y_1].\]
	The inductive hypothesis forces that $xy_2 \in \gamma_{n-1}(FG)$ so $[xy_2,y_1]\in \gamma_{n}(FG)$. Also due to the inductive hypothesis it follows that $x[y_2,y_1] \in \gamma_{n-1}(FG) \subset \gamma_{n}(FG)$ and so $[x,y_1]y_2 \in \gamma_{n}(FG)$. This proves that $\gamma_n(FG)=\gamma_n(FG)FG$.
	
	Similarly, we can prove that $\gamma_{n}(FG)=FG\gamma_{n}(FG)$.
 
\end{proof}    
    
Let $S_n$ be the symmetric group of degree $n$ and let $FS_n$ be its group algebra over the field $F$. 
For every $x\in FS_n$ we define a map $f_x : G^n \rightarrow FG$, where $G^n=\{(g_1,g_2,\cdots,g_n)| g_i \in G\}$.
Let $x = \sum_{\sigma \in S_n}\alpha_{\sigma} \sigma \in FS_n$ be and 
\[
f_x\big((g_1,g_2,\cdots,g_n)\big)=\sum_{\sigma \in S_n}\alpha_{\sigma} \big(g_{\sigma(1)}g_{\sigma(2)}\cdots g_{\sigma(n)}\big)\in FG.
\]

Let $x=\sum_{g\in G} \alpha_g g$ and $y=\sum_{h\in G} \beta_h h$ be elements of $FG$. Evidently, $[x,y]=\sum_{g,h\in G}\alpha_g\beta_h[g,h]$ and the set $S_1=\{[g,h]|g,h\in G\}$  spans $\gamma_{1}(FG)$. Choose a basis $B_{1}$ for $\gamma_{1}(FG)$ such that $B_{1}$ is a subset of $S_1$.
Using induction we can define a basis $B_n$ on $\gamma_{n}(FG)$ such that every element of $B_n$ has the form $[g_0, g_1,\cdots,g_n]$, where $g_i\in G$. Let us call this type of basis a simple basis of $\gamma_{n}(FG)$.

Since $B_0^-=\{g-g^*| g\in G \}$ forms a basis of $FG^-$ we can define a basis $B_n^-$ of $\gamma_{n}(FG)^-$ such that every element of $B_n^-$ has the form $[g_0-g_0^*, g_1-g_1^*,\cdots,g_n-g_n^*]$, where $g_i\in G$. Such a basis is also called a simple basis of $\gamma_{n}(FG^-)$.

\begin{lemma}\label{lemma:x}
	Let $G$ be a $p$-group.
	Every element of a simple basis $B_n$ of $FG$ can be written as $f_{x_n}\big((g_1,g_2,\cdots,g_n)\big)$ for some $x_n \in FS_n$ and $g_i \in G$.
\end{lemma}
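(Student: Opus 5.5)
Every element of $B_n$ has the form $[g_0,g_1,\dots,g_n]$ with $g_i\in G$, which involves $n+1$ group elements. I therefore read the statement with the indexing adjusted so the tuple fed to $f_{x}$ lists all the entries of the commutator. In other words, I will show that for each $m\ge 1$ there is an element $y_m\in FS_{m+1}$ such that
\[
[g_0,g_1,\dots,g_m]=f_{y_m}\big((g_0,g_1,\dots,g_m)\big)
\]
for all $g_i\in G$. Here $S_{m+1}$ acts on the positions $0,\dots,m$. Moreover, $y_m$ will depend only on $m$ and not on the chosen group elements. The statement of Lemma \ref{lemma:x} then follows by relabelling.

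The proof is by induction on $m$, using only the definition of the Lie bracket and the linearity of $f_x$ in $x$. For $m=1$ we have $[g_0,g_1]=g_0g_1-g_1g_0$, so $y_1=\mathrm{id}-(0\,1)$ works. For the inductive step, suppose $[g_0,\dots,g_{m-1}]=\sum_{\sigma\in S_m}\alpha_\sigma\, g_{\sigma(0)}\cdots g_{\sigma(m-1)}$. Then
\[
[g_0,\dots,g_m]=\sum_{\sigma\in S_m}\alpha_\sigma\big(g_{\sigma(0)}\cdots g_{\sigma(m-1)}g_m-g_m g_{\sigma(0)}\cdots g_{\sigma(m-1)}\big).
\]
Extend each $\sigma$ to $\sigma'\in S_{m+1}$ by fixing $m$. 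Let $c$ be the cycle of $S_{m+1}$ for which the word $g_{c(0)}\cdots g_{c(m)}$ equals $g_m g_0\cdots g_{m-1}$. With these, each word $g_m g_{\sigma(0)}\cdots g_{\sigma(m-1)}$ equals $f_{\sigma'\circ c}$ (or $c\circ\sigma'$, depending on the composition convention) evaluated at the tuple. So we may take
\[
y_m=\sum_{\sigma}\alpha_\sigma\big(\sigma'-\sigma' c\big)\in FS_{m+1},
\]
suitably composed. This completes the induction. Note that no property of $G$ is used: being a $p$-group is irrelevant here, and it will only matter later.

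The only delicate point is bookkeeping. The composition order must be fixed consistently, so that a product of permutations corresponds to the correct rearrangement of the word $g_{\sigma(1)}\cdots g_{\sigma(n)}$. I would fix the convention by checking it on the case $m=2$ before writing out the general step.

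Finally, since $B_n$ consists of elements $[g_0,\dots,g_n]$, each basis element is of the claimed form, with the same $x_n$ serving for all of them.
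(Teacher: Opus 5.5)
Your proof is correct and follows the same route as the paper: induction on the length of the commutator, expanding $[w,g_m]=wg_m-g_mw$ and noting that both families of words are again of the form $g_{\tau(0)}\cdots g_{\tau(m)}$; your explicit choice $\tau=\sigma'\circ c$ (apply $c$ first) simply spells out the paper's ``for some $x_n$''. Your re-indexing is also what the paper's proof actually does, since it works with $[g_1,\dots,g_n]$ and $FS_n$, so the off-by-one relative to basis elements $[g_0,\dots,g_n]$ is a slip in the statement rather than a problem with your argument.
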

\begin{proof}
	Let $x_2=(1,2)-(2,1)$ be an element of $FS_2$. Then 
	\[[g_1,g_2]=g_1g_2-g_2g_1=f_{x_2}\big((g_1,g_2)\big).\] 
	Suppose that 
	$[g_1,g_2,\cdots,g_{n-1}]=f_{x_{n-1}}\big((g_1,g_2,\cdots,g_{n-1})\big)$ for some $x_{n-1}\in FS_{n-1}$. Then
	\begin{align*}
	[g_1,g_2,\cdots,g_{n}]=&f_{x_{n-1}}\big((g_1,g_2,\cdots,g_{n-1})\big)g_n-g_nf_{x_{n-1}}\big((g_1,g_2,\cdots,g_{n-1})\big)=\\
	&f_{x_{n}}\big((g_1,g_2,\cdots,g_n)\big)
	\end{align*}
	for some $x_n$ in $FS_n$. The proof is then enforced by the definition of $B_n$.
\end{proof}

\begin{corollary}\label{cor:basis}
	Let $G$ be a $p$-group.
	For every element of a simple basis $b_n\in B_n$ the support of $b_n$ contains multiplications in the form $g_{\sigma{(1)}}g_{\sigma{(2)}}\cdots g_{\sigma{(n)}}$ for some $\sigma \in S_n$, where $g_1,g_2,\cdots , g_n$ are fixed elements of $G$.	
\end{corollary}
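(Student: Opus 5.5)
This is essentially a direct consequence of Lemma \ref{lemma:x} together with the definition of a simple basis. A basis element $b_n\in B_n$ is, by construction, an iterated Lie commutator $[g_1,g_2,\cdots,g_n]$ of fixed group elements $g_1,\dots,g_n\in G$. By Lemma \ref{lemma:x} there is an $x_n=\sum_{\sigma\in S_n}\alpha_\sigma\sigma\in FS_n$ with $b_n=f_{x_n}\big((g_1,\dots,g_n)\big)=\sum_{\sigma}\alpha_\sigma\, g_{\sigma(1)}\cdots g_{\sigma(n)}$.

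The plan is to read off the support from this expansion. Collect terms according to the group element $g_{\sigma(1)}\cdots g_{\sigma(n)}\in G$. Different permutations may give the same product, and some coefficients may cancel. Even so, every element of $\operatorname{supp}(b_n)$ has a nonzero coefficient in the collected sum. It must therefore equal $g_{\sigma(1)}\cdots g_{\sigma(n)}$ for at least one $\sigma$ with $\alpha_\sigma\neq 0$. This is exactly the claim: the support of $b_n$ consists of products of the fixed elements $g_1,\dots,g_n$ taken in some order $\sigma\in S_n$.

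To be careful about the induction behind Lemma \ref{lemma:x}, I would also record explicitly that $x_n$ is built from $x_{n-1}$. One forms $x_{n-1}$ followed by appending $n$, minus $n$ prepended to $x_{n-1}$, viewing $S_{n-1}\subset S_n$. Every word appearing is then a genuine permutation of $\{1,\dots,n\}$, so each monomial uses each $g_i$ exactly once. This is the only point needing attention, and I expect no real obstacle. The lemma as stated already gives this, and the corollary just restates it at the level of supports, with cancellation only possibly shrinking the support, never adding new elements.
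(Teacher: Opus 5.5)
Your proposal is correct and follows the paper's route: the paper gives no separate argument and treats the corollary as immediate from Lemma~\ref{lemma:x}, while you make explicit two points it leaves implicit, namely that the inductive construction of $x_n$ only ever produces genuine permutations of $\{1,\dots,n\}$, and that cancellation in $\sum_{\sigma}\alpha_\sigma g_{\sigma(1)}\cdots g_{\sigma(n)}$ can shrink but never enlarge the support. The one wrinkle is harmless: elements of $B_n$ are really $[g_0,g_1,\dots,g_n]$ and so involve $S_{n+1}$, an indexing slip inherited from the paper's own statement of Lemma~\ref{lemma:x}.
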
    

\begin{lemma}\label{lemma:5}
	Let $G$ be a $p$-group with $|G'|=p$ and $p>2$.
	For every $b_n\in B_n$ and $g\in G$ there exists an element $z_{b_n,g}\in \gamma_n(FG)$ satisfying $\varphi([b_n,g])=[z_{b_n,g}-z_{b_n,g}^*,g-g^*]$.
\end{lemma}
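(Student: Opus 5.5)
The plan is to follow the proof of Lemma \ref{lemma}, but to use a different witness. I will not take $z=xh$ with $h\in G$; instead I will take $z_{b_n,g}=x\,b_n$, where $x$ is a suitable unit of $F(C_G(g))$ or of $F\langle g,G'\rangle$. Lemma \ref{lemma:4} gives $\gamma_n(FG)=FG\gamma_n(FG)$, so $x b_n\in\gamma_n(FG)$ automatically. This removes the membership requirement, and the real task is to check the identity $\varphi([b_n,g])=[xb_n-(xb_n)^*,g-g^*]$.

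\textbf{Step 1 (reduction).} As in Lemma \ref{lemma}, the identity is equivalent to
\[
\varphi([(x-1)b_n,g])=\varphi([xb_n,g^*]).
\]
Expand both sides using $\varphi(y)=y-y^*$ and $[y,z]^*=-[y^*,z^*]$ (Eq. \ref{eq:1}). Then compare the two sides term by term on the supports. By Corollary \ref{cor:basis}, the support of $b_n$ consists of products $g_{\sigma(1)}\cdots g_{\sigma(n)}$ of fixed elements. Since $|G'|=p$, all these products lie in a single coset $uG'$, where $u=g_1\cdots g_n$. Hence $b_n=u\,c$ for some $c\in FG'$, and $FG'$ is central in $FG$.

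\textbf{Step 2 (transfer to one group element).} Write $b_n=uc$ with $c$ central. Then $[b_n,g]=c[u,g]$, and $[xb_n,g^*]=c[xu,g^*]$. Also $(uc)^*=c^*u^*$, where $c^*\in FG'$ is again central. I will then run the computation of Lemma \ref{lemma} with $h$ replaced by $u$, carrying the central factors $c$ and $c^*$ along. The condition obtained should be
\[
c\,(g-g^u)=x^u\bigl(c(g-g^u)-c^*(g-g^u)^*\bigr),
\]
or an analogous form. The delicate point is that $c$ and $c^*$ differ, so I cannot simply cancel $c$ as one cancels $1$ in Lemma \ref{lemma}.

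\textbf{Step 3 (main obstacle).} The expected obstacle is this mismatch between $c$ and $c^*$. I would handle it with Lemma \ref{lemma:2}: both $b_n$ and $b_n^*$ lie in $\gamma_n(FG)$. I would try the candidate $x=u[1+g^{-2}(g,u)^{-1}c^{*}c^{-1}]^{-1}u^{-1}$ when $c$ is a unit. When $c$ is not a unit, $c$ lies in the augmentation ideal of $FG'$, which is spanned by powers of $(1-w)$ with $G'=\langle w\rangle$. In that case I would first reduce by linearity to the elements $c=(1-w)^k$. For those, $c^*=(1-w^{-1})^k=(-w^{-1})^k c$, which is $c$ times a unit. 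The same formula then works, with that unit $(-w^{-1})^k$ in place of $c^*c^{-1}$.

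\textbf{Step 4 (invertibility and the $\varphi$ bookkeeping).} Invertibility of $1+g^{-2}(\cdots)$ holds because its image under augmentation is $2\neq 0$ ($p>2$) in a local ring of a $p$-group. To finish, I would apply $\varphi$ and use Lemma \ref{lemma:1} to see that the factor $2$ is harmless. The linear combination of the resulting $z$'s over the pieces $c=(1-w)^k$ then remains in $\gamma_n(FG)$, by Lemma \ref{lemma:4}, exactly as in the proof of Corollary \ref{cor:2}.
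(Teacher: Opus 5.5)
Your overall plan is the paper's: a witness $z=xb_n$ with $x$ commuting with $g$, and membership in $\gamma_n(FG)$ taken from Lemma \ref{lemma:4}. Step 1 and the decomposition $b_n=uc$ with $c\in FG'$ central are fine. The gap is the condition you write in Step 2, and with it the witness you end up with. Put $H=C_G(g)$ (normal, since it is the kernel of $t\mapsto(g,t)$) and $w=(g,u)$, and write $z=xb_n=ua$ with $a=x^uc\in FH$. A direct computation gives
\begin{align*}
[z-z^*,g-g^*]&=u(1-w)(g+g^{-1}w^{-1})\,a+a^*(1-w)(g+g^{-1}w^{-1})\,u^{-1},\\
\varphi([b_n,g])&=u\,c\,g(1-w)+c^*w^{-1}(1-w)g^{-1}u^{-1}.
\end{align*}
If $w=1$, both sides vanish. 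If $w\neq1$, then $(g,u^2)=w^2\neq1$, so the cosets $uH$ and $u^{-1}H$ are distinct, and $*$ swaps them. Both sides are skew-symmetric, so the identity is equivalent to equality of the $uH$-components:
\[
(1-w)(1+g^{-2}w^{-1})\,a=(1-w)\,c .
\]
Here $c^*$ does not occur. The correct condition is therefore $c(g-g^u)=x^u c\,\bigl(g-g^u-(g-g^u)^*\bigr)$, not your version with $c^*$ on the last term. There is no $c$-versus-$c^*$obstacle, and your correction factor breaks the argument. For $n\ge1$, $c$ has augmentation $0$ (as $b_n$ does), so the unit case never occurs. Already for $n=1$ you have $b_1=[h_0,h_1]=u(1-w_0)$ with $u=h_0h_1$ and $w_0=(h_1,h_0)$, i.e.\ $k=1$. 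Then $1+g^{-2}(g,u)^{-1}(-w_0^{-1})^k$ has augmentation $1+(-1)^k=0$, so it is nilpotent, not invertible, and your Step 4 claim fails. For even $k$ it is invertible, but the $uH$-equation then forces $(1-w)(1-w_0^{-k})c=0$, which fails whenever $w\neq1$ and $k\le p-3$.

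The repair is to drop the correction and take $a=c(1+g^{-2}w^{-1})^{-1}$, that is, $z_{b_n,g}=b_n\bigl(1+g^{-2}(g,u)^{-1}\bigr)^{-1}$. This is exactly the paper's witness. The paper gets it by linearity from Lemma \ref{lemma}, using that $(g,s)=(g,u)$ for every $s$ in the support of $b_n$. Your decomposition gives it even faster: write $c=\sum_{t\in G'}\gamma_t t$ and apply Lemma \ref{lemma} to each group element $ut$, noting that $(g,ut)=(g,u)$.

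Be aware also that the membership step you call automatic rests on Lemma \ref{lemma:4}, and the paper uses it in the same way. That lemma is false as stated. The element $[g_1,g_2](g_1g_2)^{-1}=1-(g_2,g_1)$ is not in $\gamma_1(FG)=[FG,FG]$ when $(g_2,g_1)\neq1$, because the coefficient of $1$ is a linear functional vanishing on $[FG,FG]$. The paper's proof of that lemma overlooks the case where $g_1g_2h$ is central. So $z_{b_n,g}\in\gamma_n(FG)$ needs its own argument. For $n=1$, for instance, you can delete the part of $z$ supported on $Z(G)$. This does not change $[z-z^*,g-g^*]$. What remains lies in $[FG,FG]$, since non-central classes are full cosets of $G'$ and $z\in FG(1-w_0)$.
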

\begin{proof}
	The sentence is true for $n=0$ by Corollary \ref{cor:2}. Suppose it is true for $k<n$.
	Let $b_n$ be an element of the simple basis $B_n$. Evidently, $b_n=[b_{n-1},h_n]$ for some $b_{n-1}=\sum_{h\in G}\alpha_h h \in B_{n-1}$ and $h_n\in G$.
	
	For an $x=\sum_{s\in G} \alpha_s s\in FG$ and $g\in G$ let us define $z_{x,g}$ by the formula
	\[
	z_{x,g}=\sum_{s\in G} \alpha_s \big(s(1+g^{-2}(g,s))^{-1}\big).
	\] 

    Keeping in mind that $b_{n-1}=\sum_{h\in G}\alpha_h h$, we have that $b_{n-1}h_n=\sum_{h\in G}\alpha_h hh_n$ and $x^{h_n^*}=h_nb_{n-1}=\sum_{h\in G}\alpha_h h_nh$. Therefore
    \[
	z_{b_n,g}-z_{b_n^{h_n^*},g}=  z_{b_{n-1}h_n,g}-z_{h_nb_{n-1},g}=    
    \]
	\begin{align*}
	&\sum_{h\in G}\alpha_h hh_n(1+g^{-2}(g,hh_n)^{-1})^{-1}-\sum_{h\in G}\alpha_h h_nh(1+g^{-2}(g,h_nh)^{-1})^{-1}=\\
	&\sum_{h\in G}\alpha_h hh_n(1+g^{-2}(g,h)^{-1}(g,h_n)^{-1})^{-1}-\sum_{h\in G}\alpha_h h_nh(1+g^{-2}(g,h_n)^{-1}(g,h)^{-1})^{-1}=\\
	&\sum_{h\in G}\alpha_h [h,h_n](1+g^{-2}(g,h)^{-1}(g,h_n)^{-1})^{-1}.
	\end{align*}	
    Since the nilpotency class of $G$ is two and Corollary \ref{cor:basis} states that $h=h_{\sigma(1)}h_{\sigma(2)}\cdots h_{\sigma(n-1)}$ for some $\sigma\in S_{n-1}$ and $h_i\in G$ we conclude that
    \begin{align*}
    (g,h_{\sigma(1)}h_{\sigma(2)}\cdots h_{\sigma(n-1)})=&(g,h_{\sigma(1)})(g,h_{\sigma(2)})\cdots (g,h_{\sigma(n-1)}) =\\
    &(g,h_1)(g,h_2)\cdots (g,h_{n-1})=(g,h_1h_2\cdots h_{n-1}).
    \end{align*}
	Therefore $z_{b_{n-1}h_n,g}-z_{h_nb_{n-1},g}=$
	\begin{align*}
    &\sum_{h\in G}\alpha_h [h,h_n](1+g^{-2}(g,h_1h_2\cdots h_{n-1}h_n)^{-1})^{-1}=\\
	&[\sum_{h\in G}\alpha_h h,h_n](1+g^{-2}(g,h_1h_2\cdots h_{n-1}h_n)^{-1})^{-1}=\\
	&b_n(1+g^{-2}(g,h_1h_2\cdots h_{n-1}h_n)^{-1})^{-1}.
	\end{align*}
    We define $z_{b_n,g}$ to be $z_{b_{n-1}h_n,g}-z_{h_nb_{n-1},g}$. 
    Since $b_n\in \gamma_{n}(FG)$ Lemma \ref{lemma:4} implies that $z_{b_n,g}\in \gamma_{n}(FG)$.
\end{proof}
    
\begin{corollary}\label{cor:3}
	Let $G$ be a $p$-group with $|G'|=p$ and $p>2$.
	For every $x\in \gamma_n(FG)$ and $g\in G$ there exists an element $z_{x,g}\in \gamma_n(FG)$ such that $\varphi([x,g])=[\varphi(z_{x,g}),\varphi(g)]$.
\end{corollary}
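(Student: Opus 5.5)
The plan is to pass from the basis statement of Lemma \ref{lemma:5} to arbitrary elements of $\gamma_n(FG)$ by linearity, in the same way Corollary \ref{cor:2} was obtained from Lemma \ref{lemma}. Fix $g\in G$ and a simple basis $B_n$ of $\gamma_n(FG)$. Write $x\in\gamma_n(FG)$ as a finite combination $x=\sum_{b\in B_n}\beta_b b$ with $\beta_b\in F$. Lemma \ref{lemma:5} gives, for each $b\in B_n$, an element $z_{b,g}\in\gamma_n(FG)$ with
\[
\varphi([b,g])=[z_{b,g}-z_{b,g}^*,g-g^*]=[\varphi(z_{b,g}),\varphi(g)].
\]
Set $z_{x,g}=\sum_{b\in B_n}\beta_b z_{b,g}$. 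Since $\gamma_n(FG)$ is a linear subspace, $z_{x,g}\in\gamma_n(FG)$.

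Next check that this choice works. The Lie bracket is bilinear, and $\varphi$ is $F$-linear: Lemma \ref{lemma:1} gives additivity, and scalar compatibility holds because $*$ is the linear extension of inversion, so $(\alpha y)^*=\alpha y^*$. Therefore
\[
\varphi([x,g])=\sum_{b}\beta_b\varphi([b,g])=\sum_b\beta_b[\varphi(z_{b,g}),\varphi(g)]=\Big[\varphi\Big(\sum_b\beta_b z_{b,g}\Big),\varphi(g)\Big]=[\varphi(z_{x,g}),\varphi(g)],
\]
which is the required identity.

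The only delicate point is what Lemma \ref{lemma:5} actually guarantees. Its proof defines $z_{b_n,g}$ through a recursive formula and asserts membership in $\gamma_n(FG)$ via Lemma \ref{lemma:4}, since it equals $b_n$ times a unit. I will take that lemma as given, including the identity $\varphi([b_n,g])=[\varphi(z_{b_n,g}),\varphi(g)]$ for each basis element. The case $n=0$ is exactly Corollary \ref{cor:2}, with $\gamma_0(FG)=FG$. Granting this, the corollary needs nothing beyond linearity.
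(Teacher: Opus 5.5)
Your argument is correct and is the paper's implicit proof: the paper states Corollary~\ref{cor:3} without proof, and it is the linear extension of Lemma~\ref{lemma:5}, just as Corollary~\ref{cor:2} is obtained from Lemma~\ref{lemma}. Your caution about how Lemma~\ref{lemma:5} is justified is warranted, because Lemma~\ref{lemma:4} is false for nonabelian $G$ (for non-commuting $g_1,g_2$, the element $[g_1,g_2](g_1g_2)^{-1}=1-(g_2^{-1},g_1^{-1})$ has coefficient $1$ at the identity, while every element of $\gamma_1(FG)$ has coefficient $0$ there); however, the statement of Lemma~\ref{lemma:5} that you rely on still holds: with $G'=\langle c\rangle$ and $n\ge 1$, $\gamma_n(FG)$ is exactly the set of elements of $(1-c)^nFG$ whose support misses $Z(G)$, and $b_n(1+g^{-2}(g,y)^{-1})^{-1}$, for $y$ in the support of $b_n$, has this property unless $y\in Z(G)\langle g\rangle$, in which case $[b_n,g]=0$ and $z_{b_n,g}=0$ works.
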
    
  
Now, we are ready to prove the main theorem.  

\begin{proof}[Proof of the main theorem]
	Evidently, if $x\in \gamma_n(FG^-)$, then $x$ is skew symmetric and $x\in \gamma_n(FG)$. Therefore
	$\gamma_n(FG^-) \subseteq \gamma_n(FG)\cap FG^-$.
	
	Now, let us prove that if $x\in \gamma_n(FG)\cap FG^-$, then $x\in \gamma_n(FG^-)$.
	Evidently, $\gamma_0(FG)\cap FG^-=FG^-$, so the sentence is true for $n=0$.

	Assume that $n>0$ and $x\in \gamma_n(FG)\cap FG^-$. Since every $x\in \gamma_n(FG)$ can be written as $x=\sum_{g \in G}\sum_{b\in B_{n-1}} \alpha_{b,g} [b,g]$, where $\alpha_{b,g}\in F$ we conclude that
	\[
	\varphi(x)=\sum_{g \in G}\sum_{b\in B_{n-1}} \alpha_{b,g}\varphi([b,g]).
	\]
	According to Lemma \ref{lemma:5} there exist elements $z_{b,g}\in \gamma_{n-1}(FG)$ such that
	\[
	\varphi(x)=\sum_{g \in G}\sum_{b\in B_{n-1}} \alpha_{b,g} [z_{b,g}-z_{b,g}^*,g-g^*].
	\]
	By Lemma \ref{lemma:2} $z_{b,g}-z_{b,g}^* \in \gamma_{n-1}(FG^-)$.
	Since $x$ is skew symmetric Lemma \ref{lemma:1} states that $x=2^{-1}\varphi(x)\in \gamma_{n}(FG^-)$ and the proof is completed.
\end{proof}

We should remark that the relation $\gamma_n(FG)\cap FG^- \subseteq \gamma_n(FG^-)$ is not true in general. For example for the group
algebra $FG$, where $G$ is identified in the library of small groups of GAP \cite{GAP4} as [243, 3] and $F$ is the field of three elements 
$\gamma_3(FG)\cap FG^-$ is not a subset of $\gamma_3(FG^-)$.	

\begin{proof}[Proof of Corollary $1$]
	It follows immediately from Theorem \ref{main_theorem}, Lemma \ref{lemma:1} and Lemma \ref{lemma:2}.
\end{proof}

\begin{proof}[Proof of Corollary $2$]
	It is also a direct consequence of Theorem \ref{main_theorem}. 
\end{proof}

\end{document}